\newtheorem{theorem}{Theorem}
\newtheorem{remark}{Remark}
\newtheorem{corollary}{Corollary}
\newtheorem{proposition}{Proposition}
\newcommand{\ve}[1]{\mathbf{#1}}
\DeclareMathOperator*{\argmax}{arg\,max}
\title{Greed Works: An Improved Analysis of Sampling Kaczmarz-Motzkin}
\author[1]{Jamie Haddock}
\author[2]{ Anna Ma}
\affil[1]{Department of Mathematics, University of California, Los Angeles}
\affil[2]{Department of Mathematics, University of California, Irvine}
\begin{document}

\maketitle

\begin{abstract}
Stochastic iterative algorithms have gained recent interest in machine learning and signal processing for solving large-scale systems of equations, $A\ve{x}=\ve{b}$. One such example is the Randomized Kaczmarz (RK) algorithm, which acts only on single rows of the matrix $A$ at a time. While RK randomly selects a row of $A$ to work with, Motzkin's Method (MM) employs a greedy row selection. Connections between the two algorithms resulted in the Sampling Kaczmarz-Motzkin (SKM) algorithm which samples a random subset of $\beta$ rows of $A$ and then greedily selects the best row of the subset. 
Despite their variable computational costs, all three algorithms have been proven to have the same theoretical upper bound on the convergence rate. In this work, an improved analysis of the range of random (RK) to greedy (MM) methods is presented. This analysis improves upon previous known convergence bounds for SKM, capturing the benefit of partially greedy selection schemes. This work also further generalizes previous known results, removing the theoretical assumptions that $\beta$ must be fixed at every iteration and that $A$ must have normalized rows.
\end{abstract}

\section{Introduction}

Large-scale systems of equations arise in many areas of data science, including in machine learning and as subroutines of several optimization methods \cite{boyd2004convex}.  We consider solving these large systems of linear equations, $A \ve{x} = \ve{b}$, where $A \in \mathbb{R}^{m \times n}$, $\ve{b} \in \mathbb{R}^m$, and $m \gg n$.  Iterative methods which use a small portion of the data in each iteration are typically employed in this domain.  These methods offer a small memory footprint and good convergence guarantees.  The Kaczmarz method \cite{Kac37:Angenaeherte-Aufloesung} is such an iterative method that consists of sequential orthogonal projections towards the solution set of a single equation (or subsystem).  Given the system $A\ve{x} = \ve{b}$, the method computes iterates by projecting onto the hyperplane defined by the equation $\ve{a}_i^T \ve{x} = b_i$ where $\ve{a}_i^T$ is a selected row of the matrix $A$ and $b_i$ is the corresponding entry of $\ve{b}$. The iterates are recursively
defined as  $$\ve{x}_{j+1} = \ve{x}_j + \frac{b_i - \ve{a}_{i_{j}}^T \ve{x}_i}{\|\ve{a}_{i_{j}}\|^2} \ve{a}_i.$$ We assume that $A\ve{x} = \ve{b}$ is consistent and $m > n$, but make no assumption on $\text{rank}(A)$.  We will use $\ve{r}_j := A\ve{x}_j - \ve{b}$ to represent the $j$th residual and $\ve{e}_j := \ve{x}_j - \ve{x}^*$ to represent the $j$th error term. We let $A^\dagger$ denote the Moore-Penrose pseudoinverse of the matrix $A$. Additionally, we let $\sigma_{\min}(A)$ be the smallest nonzero singular value of $A$ and unless otherwise noted, we let $\|\cdot\|$ represent the Euclidean norm. We let $\|\cdot\|_F$ denote the Frobenius norm and $\|\cdot\|_\infty$ denote the $\ell^\infty$ norm. A visualization of several iterations of a Kaczmarz method are shown in Figure \ref{fig:Kacz}.  

\begin{figure}
\centering
	\definecolor{uuuuuu}{rgb}{0.26666666666666666,0.26666666666666666,0.26666666666666666}
	\definecolor{ududff}{rgb}{0.30196078431372547,0.30196078431372547,1.}
	\begin{tikzpicture}[line cap=round,line join=round,>=triangle 45,x=1.0cm,y=1.0cm]
	\clip(-2.7258834270895766,-0.5869710300152282) rectangle (5.472915391462454,5.919351551881352);
	\draw [line width=1.pt,domain=-2.7258834270895766:5.472915391462454] plot(\x,{(--6.9336-2.7*\x)/2.34});
	\draw [line width=1.pt,domain=-2.7258834270895766:5.472915391462454] plot(\x,{(-0.684-3.36*\x)/-2.44});
	\draw [line width=1.pt,domain=-2.7258834270895766:5.472915391462454] plot(\x,{(-8.2868--1.58*\x)/-3.8});
	\draw [line width=1.pt,domain=-2.7258834270895766:5.472915391462454] plot(\x,{(--6.3964--0.86*\x)/4.2});
	\draw [line width=1.pt,domain=-2.7258834270895766:5.472915391462454] plot(\x,{(--1.8328--2.9*\x)/2.82});
	\draw [line width=1.pt,domain=-2.7258834270895766:5.472915391462454] plot(\x,{(--0.0168--3.3*\x)/2.02});
	\draw [line width=1.pt,domain=-2.7258834270895766:5.472915391462454] plot(\x,{(--5.3888-2.72*\x)/1.44});
	\draw (1.8243805534266095,5.741896431118171) node[anchor=north west] {$\ve{x}_0$};
	\draw [line width=1.pt,dash pattern=on 2pt off 2pt] (1.82,5.48)-- (2.932491316197297,4.799020467054989);
	\draw [line width=1.pt,dash pattern=on 2pt off 2pt] (2.932491316197297,4.799020467054989)-- (3.4989740699715863,4.248164823729645);
	\draw [line width=1.pt,dash pattern=on 2pt off 2pt] (0.5567775179380073,2.690531355005986)-- (3.4989740699715863,4.248164823729645);
	\draw (2.934201036479338,5.062131385248379) node[anchor=north west] {$\ve{x}_1$};
	\draw (3.502984034043861,4.421093899760177) node[anchor=north west] {$\ve{x}_2$};
	\draw (0.5619597539541311,2.8534724674482086) node[anchor=north west] {$\ve{x}_3$};
	\begin{scriptsize}
	\draw [fill=ududff] (1.06,1.74) circle (2.5pt);
	\draw [fill=ududff] (1.82,5.48) circle (2.5pt);
	\draw [fill=uuuuuu] (2.932491316197297,4.799020467054989) circle (2.0pt);
	\draw [fill=uuuuuu] (3.4989740699715863,4.248164823729645) circle (2.0pt);
	\draw [fill=uuuuuu] (0.5567775179380073,2.690531355005986) circle (2.0pt);
	\end{scriptsize}
	\end{tikzpicture}
	\caption{Several iterations of a Kaczmarz method. The iterate $\ve{x}_{j+1}$ is the orthogonal projection of $\ve{x}_{j}$ onto the solution set of the selected equation (represented by a line).}\label{fig:Kacz}
\end{figure}
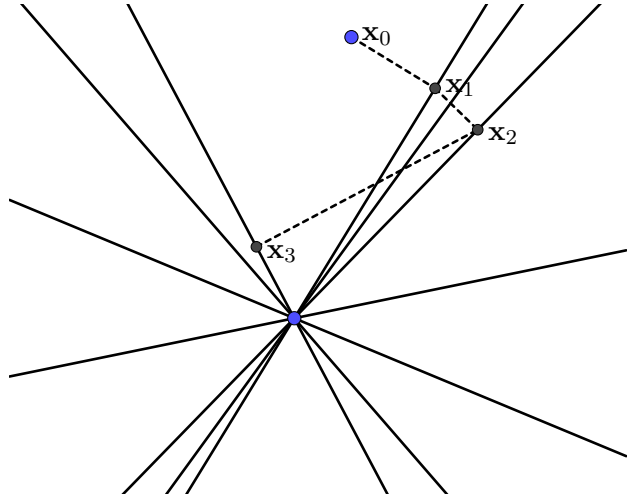

The Kaczmarz method was originally proposed in the late 30s~\cite{Kac37:Angenaeherte-Aufloesung} and rediscovered in the 1970's under the name \emph{algebraic reconstruction technique (ART)} as an iterative method for reconstructing an image from a series of angular projections in computed tomography \cite{GBH70:Algebraic-Reconstruction, HM93:Algebraic-Reconstruction}. This method has seen popularity among practitioners and researchers alike since the beginning of the digital age \cite{censor1983strong, hanke1990acceleration}, but saw a renewed surge of interest after the elegant convergence analysis of the \emph{Randomized Kaczmarz (RK) method} in \cite{SV09:Randomized-Kaczmarz}.  In \cite{SV09:Randomized-Kaczmarz}, the authors showed that for a consistent system with unique solution, RK (with specified sampling distribution) converges at least linearly in expectation with the guarantee
\begin{equation}
    \mathbb{E}\|\ve{e}_k\|^2 \le \left(1 - \frac{\sigma_{\min}^2(A)}{\|A\|_F^2}\right)^k \|\ve{e}_0\|^2. \label{eq:RKrate}
\end{equation}
Many variants and extensions followed, including convergence analyses for inconsistent and random linear systems \cite{Nee10:Randomized-Kaczmarz, CP12:Almost-Sure-Convergence}, connections to other popular iterative algorithms \cite{Ma2015convergence, NSWjournal, Pop01:Fast-Kaczmarz-Kovarik, Pop04:Kaczmarz-Kovarik-Algorithm, frek}, block approaches \cite{needell2013paved, popa2012kaczmarz}, acceleration and parallelization strategies \cite{EN11:Acceleration-Randomized, liu2014asynchronous, morshed2019accelerated, moorman2020randomized}, and techniques for reducing noise and corruption \cite{ZF12:Randomized-Extended, HN18Corrupted}.

Another popular Kaczmarz method extension is greedy (rather than randomized) row selection, which has been rediscovered several times in the literature as the ``most violated constraint control" or the ``maximal-residual control" \cite{CensorRowAction, GreedyKaczmarz, Petra2016}.  This method was proposed in the 1950's as an iterative relaxation method for linear programming by Agmon, Motzkin, and Schoenberg under the name \emph{Motzkin's relaxation method for linear inequalities (MM)} \cite{motzkinschoenberg, agmon}.  In \cite{agmon}, the author showed that MM converges at least linearly (deterministically) with the convergence rate of \eqref{eq:RKrate}. The bodies of literature studying this greedy strategy have remained somewhat disjoint, with analyses for linear systems of equations in the numerical linear algebra community and analyses for linear systems of inequalities in the operations research and linear programming community \cite{goffin, goffinnonpoly, telgen, amaldihauser, betkelp, BetkeGritzmann, chubanovlp}.  There has been recent work in analyzing variants of this greedy strategy \cite{du2019new, bai2018greedy, bai2018relaxed, RN19sketchasilo}.  In \cite{RN19sketchasilo}, the authors analyze MM on a system to which a Gaussian sketch has been applied.  In \cite{bai2018greedy,bai2018relaxed}, the authors analyze variants of MM in which the equation selected in each iteration is chosen randomly amongst the set whose residual values are sufficiently near the maximal residual value.  In \cite{du2019new}, the authors provide a convergence analysis for a generalized version of MM in which the equation chosen in each iteration is that which has the maximal \emph{weighted residual} value which are the residual values divided by the norm of the corresponding row of the measurement matrix.
In \cite{DLHN16SKM}, the authors illustrated the connection between MM and RK and proposed a family of algorithms that interpolate between the two, known as the \emph{Sampling Kaczmarz-Motzkin (SKM) methods}. 

The SKM methods operate by randomly sampling a subset of the system of equations, computing the residual of this subset, and projecting onto the equation corresponding to the largest magnitude entry of this sub-residual. The family of methods (parameterized by the size of the random sample of equations, $\beta$) interpolates between MM, which is SKM with $\beta = m$, and RK, which is SKM with $\beta = 1$.  In \cite{DLHN16SKM}, the authors prove that the SKM methods converge at least linearly in expectation with the convergence rate specified in \eqref{eq:RKrate}. Meanwhile, the empirical convergence of this method is seen to depend upon $\beta$; however, increasing $\beta$ also increases the computational cost of each iteration so the per iteration gain from larger sample size may be outweighed by the in-iteration cost.  This is reminiscent of other methods which use sampled subsets of data in each iteration, such as the block projection methods \cite{AC89:Block-Iterative-Projection, needell2013paved, needell2015randomized}. 

Like SKM, the randomized block Kaczmarz (RBK) methods use a subset of rows $\tau \subset [m]$ to produce the next iterate; rather than forcing the next iterate to satisfy the single sampled equation as in RK, block iterates satisfy \emph{all} the equations in the randomly sampled block.  The $(k+1)$st RBK iteration is given by $$\ve{x}_{k+1} = \ve{x}_k + (A_\tau)^\dagger (\ve{b}_\tau - A_\tau \ve{x}_k),$$ where $A_\tau$ and $\ve{b}_\tau$ represent the restriction onto the row indices in $\tau$.  In \cite{needell2013paved}, the authors prove that on a system with a row-normalized measurement matrix and a well-conditioned row-paving RBK converges at least linearly in expectation with the guarantee
\begin{equation}
    \mathbb{E}\|\ve{e}_k\|^2 \le \left(1 - \frac{\sigma_{\min}^2(A)}{C\|A\|^2 \log(m+1)}\right)^k \|\ve{e}_0\|^2.
\end{equation}
where $C$ is an absolute constant and $\|A\|$ denotes the operator norm of the matrix.  This can be a significant improvement over the convergence rate of \eqref{eq:RKrate} when $\|A\|_F^2 \gg \|A\|^2 \log(m+1)$.  However, the cost per iteration scales with the size of the blocks.
In \cite{needell2015randomized}, the authors generalize this result to inconsistent systems and show that, up to a convergence horizon, RBK converges to the least-squares solution. 

In \cite{gower2015randomized, gower2015stochastic}, the authors introduce a framework of iterative methods known as the \emph{sketch-and-project} methods.  The sketch-and-project framework of methods produce each new iterate by projecting the previous iterate onto a sketch of the linear system; i.e., $\ve{x}_{k+1} = \text{argmin}_{\ve{x} \in \mathbb{R}^n} \|\ve{x} - \ve{x}_k\|_B^2 \text{ s.t. } S_k^\top A \ve{x} = S_k^\top \ve{b}$.  Subsequent works proposed and analyzed variants with momentum \cite{loizou2017momentum}, inexact variants \cite{loizou2019convergence}, and adaptive variants \cite{gower2019adaptive}.  This framework includes as special cases many forms of row- and column-action methods and second-order iterative least-squares methods \cite{xiang2017randomized}.  Kaczmarz methods which iteratively project onto the solution spaces of subsets of rows in each iteration (like Block RK or SKM) can be interpreted and analyzed in this framework.  Single row-action methods are recovered when the sketching matrices select a single row of the system.  The SKM methods are recovered when the sketching matrices select a single row of the system and the choice of which sketch to use in each iteration is made in the same way as SKM (a randomized sample then a greedy selection based upon sketched residual).  The results recovered from \cite{gower2015randomized} for this interpretation of SKM coincide with \eqref{eq:RKrate}.

In \cite{needell2019block}, the authors, inspired by the sketching framework in \cite{gower2015randomized}, construct a block-type method which iterates by projecting onto a Gaussian sketch of the equations.  They show that this method converges at least linearly in expectation with the guarantee
\begin{equation}
    \mathbb{E}\|\ve{e}_k\|^2 \le \left(1 - \left[\frac{\sqrt{s}\sigma_{\min}(A)}{9\sqrt{s}\|A\| + C\|A\|_F}\right]^2\right)^k \|\ve{e}_0\|^2. 
\end{equation}
where $C$ is an absolute constant and $s$ is the number of rows in the resulting sketched system.
This result requires a Gaussian sketch which is a costly operation, however the authors suggest using a Gaussian sketch of only a subset of the equations.  This result is most related to SKM and to our main result due to the presence of $s$, the size of the sketched system, in the bound.

\section{Previous Results}

This section focuses on the convergence behavior of the RK, MM, and SKM methods. Each of these projection methods is a special case of Algorithm \ref{alg:Kacz} with a different \emph{selection rule} (Line~\ref{line:selection}). In iteration $j$, RK uses the randomized selection rule that chooses $t_j = i$ with probability $\|\ve{a}_{i}\|_2^2/\|A\|_F^2$, MM uses the greedy selection rule $t_j = \argmax_i |\ve{a}_i^\top\ve{x}_{j-1} - {b_i}|$, and SKM uses the hybrid selection rule that first samples a subset of $\beta$ rows, $\tau_j$, uniformly at random from all subsets of size $\beta$, $\tau_j \sim \text{unif}(\binom{[m]}{\beta})$, and then chooses $t_j = \argmax_{i \in \tau_j} |\ve{a}_i^\top\ve{x}_{j-1} - b_i|$. As previously mentioned, RK and MM are special cases of the SKM method when the sample size $\beta = 1$ and $\beta = m$, respectively.  Each of the methods converge linearly when the system is consistent with unique solution (RK and SKM converge linearly in expectation, MM converges linearly deterministically).  In Table~\ref{tab:iteratedrates}, we present the selection rules and convergence rates for RK, MM, and SKM.  Note that under the assumption that $A$ has been normalized so that $\|\ve{a}_i\|^2 = 1$, each of these upper bounds on the convergence rate is the same since $\|A\|_F^2 = m$.  Thus, these results do not reveal any advantage the more computationally expensive methods (MM, SKM with $\beta \gg 1$) enjoy over RK.  There are, in fact, pathological examples on which RK, MM, and SKM exhibit nearly the same behavior (e.g., consider the system defining two lines that intersect at one point in $\mathbb{R}^2$), so it is not possible to prove significantly different convergence rates without leveraging additional properties of the system.

\begin{algorithm}
	\caption{Generic Kaczmarz Method}
	\begin{algorithmic}[1]
		\Procedure{Kacz}{$A,\ve{b},\ve{x}_0$}
		\State $k = 1$
		\Repeat
		\State Choose $t_k \in [m]$ according to selection rule. \label{line:selection}
		\State $\ve{x}_k = \ve{x}_{k-1} -  \frac{\ve{a}_{t_k}^T\ve{x}_{k-1} - b_{t_k}}{\|\ve{a}_{t_k}\|_2^2}\ve{a}_{t_k}$. 
		\State $k = k+1$
		\Until{stopping criterion reached}
		\State \textbf{return} $\ve{x}_k$ 
		\EndProcedure
	\end{algorithmic}
	\label{alg:Kacz}
\end{algorithm}

\begin{table}
\begin{center}
\begin{tabular}{| c | c | c |}
\hline
 & Selection Rule & Convergence Rate \\
\hline
RK \cite{SV09:Randomized-Kaczmarz} & $\mathbb{P}(t_j = i) = \frac{\|\ve{a}_i\|^2}{\|A\|_F^2}$ & $\mathbb{E}\|\ve{e}_k\|^2 \le (1 - \frac{\sigma_{\min}^2(A)}{\|A\|_F^2})^k \|\ve{e}_0\|^2$  \\
\hline
SKM \cite{DLHN16SKM} & \begin{tabular}{c} $\tau_j \sim \text{unif}(\binom{[m]}{\beta})$ \\ $t_j = \argmax_{i \in \tau_j} |\ve{a}_i^\top \ve{x}_{j-1} - b_i|$ \end{tabular} & $\mathbb{E}\|\ve{e}_k\|^2 \le (1 - \frac{\sigma_{\min}^2(A)}{m})^k \|\ve{e}_0\|^2$ \\
\hline
MM \cite{agmon} & $t_j = \argmax_i |\ve{a}_i^\top \ve{x}_{j-1} - b_i|$ & $\|\ve{e}_k\|^2 \le (1 - \frac{\sigma_{\min}^2(A)}{m})^k \|\ve{e}_0\|^2$ \\
\hline
\end{tabular}
\end{center}
\caption{The selection rules and convergence rates of RK, SKM, and MM. The presented results for MM and SKM assume that $A$ has been normalized so that $\|\ve{a}_i\|^2 = 1$.}\label{tab:iteratedrates}
\end{table}

In \cite{HN18Motzkin}, the authors demonstrate that MM can converge faster than RK or SKM and that the convergence rate depends on the structure of the residual terms of the iterations, $\ve{r}_k = A\ve{x}_k - \ve{b}$.  In particular, they prove that $$\|\ve{e}_{k}\|^2 \le \Pi_{j=0}^{k-1} \bigg(1 - \frac{\sigma_{\min}^2(A)}{4\gamma_j}\bigg) \|\ve{e}_0\|^2,$$ where $\gamma_j$ is the \emph{dynamic range} of the $i$th residual, $\gamma_j := \frac{\|\ve{r}_j\|^2}{\|\ve{r}_j\|_\infty^2}$.  Our main contribution in this paper is to prove that the SKM methods can exhibit a similarly accelerated convergence rate and the advantage scales with the size of the sample, $\beta$.  Again, this advantage depends upon the structure of the residuals of the iterations.  We define here a generalization of the dynamic range used in \cite{HN18Motzkin}; our dynamic range is defined as 
\begin{equation}
\gamma_j = \frac{\sum_{\tau \in {[m] \choose \beta}} \|A_{\tau}\ve{x}_{j-1} - \ve{b}_{\tau}\|^2}{\sum_{\tau \in {[m] \choose \beta}} \|A_{\tau}\ve{x}_{j-1} - \ve{b}_{\tau}\|_\infty^2}.
\label{eq:gammaj}
\end{equation}  

Now, we let $\mathbb{E}_{\tau_j}$ denote expectation with respect to the random sample $\tau_j$ conditioned upon the sampled $\tau_i$ for $i < j$, and $\mathbb{E}$ denote expectation with respect to all random samples $\tau_{i}$ for $1 \le i \le j$ where $j$ is understood to be the last iteration in the context in which $\mathbb{E}$ is applied. We state our main result below in Corollary \ref{cor:SKMconvergence}; this is a corollary of our generalized result which will be discussed and proven later. 
\begin{corollary}
Let $A$ be normalized so $\|\ve{a}_i\| = 1$ for all rows $i = 1,...,m$. {Suppose the system of equations $A \ve{x} = \ve{b}$ is consistent, define $\ve{x}^* = A^\dagger \ve{b}$, and let $\ve{x}_0 \in \text{range}(A^\top)$.}  Then SKM converges at least linearly in expectation and the bound on the
rate depends on the dynamic range, $\gamma_k$ of the random sample of $\beta$ rows of $A$, $\tau_k$. Precisely, in the $k$th iteration of SKM, we have $$\mathbb{E}_{\tau_k} \|\ve{x}_k - \ve{x}^*\|^2 \le \bigg(1 - \frac{\beta \sigma_{\min}^2(A)}{\gamma_k m}\bigg) \|\ve{x}_{k-1} - \ve{x}^*\|^2,$$ so applying expectation with respect to all iterations, we have 
\begin{equation*}
\mathbb{E}\|\ve{x}_k - \ve{x}^*\|^2 \le \prod_{j=1}^k \bigg(1 - \frac{\beta \sigma_{\min}^2(A)}{\gamma_j m}\bigg) \|\ve{x}_0 - \ve{x}^*\|^2.
\end{equation*} \label{cor:SKMconvergence}
\end{corollary}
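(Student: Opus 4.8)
The plan is to derive a one-step conditional contraction and then iterate. First I would write down the error recursion for a single SKM projection. Since the rows are normalized, the update is $\ve{x}_k = \ve{x}_{k-1} - (\ve{a}_{t_k}^\top\ve{x}_{k-1} - b_{t_k})\ve{a}_{t_k}$, and using consistency ($\ve{a}_{t_k}^\top\ve{x}^* = b_{t_k}$) together with $\|\ve{a}_{t_k}\| = 1$, an orthogonality computation collapses the squared error to
$$\|\ve{e}_k\|^2 = \|\ve{e}_{k-1}\|^2 - (\ve{a}_{t_k}^\top\ve{e}_{k-1})^2.$$
The key observation is that $\ve{a}_{t_k}^\top\ve{e}_{k-1}$ is exactly the $t_k$ entry of the residual $\ve{r}_{k-1}$, and since $t_k = \argmax_{i\in\tau_k}|\ve{a}_i^\top\ve{x}_{k-1}-b_i|$ is the greedily chosen index within the sample, $(\ve{a}_{t_k}^\top\ve{e}_{k-1})^2 = \|A_{\tau_k}\ve{x}_{k-1}-\ve{b}_{\tau_k}\|_\infty^2$. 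Thus the progress in a single step equals the squared sup-norm of the sub-residual on the sampled block.

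Next I would take the conditional expectation $\mathbb{E}_{\tau_k}$ over the uniform sample. Because every subset of size $\beta$ is equally likely,
$$\mathbb{E}_{\tau_k}\|A_{\tau_k}\ve{x}_{k-1}-\ve{b}_{\tau_k}\|_\infty^2 = \frac{1}{\binom{m}{\beta}}\sum_{\tau\in\binom{[m]}{\beta}}\|A_\tau\ve{x}_{k-1}-\ve{b}_\tau\|_\infty^2.$$
Here the definition of the generalized dynamic range in \eqref{eq:gammaj} is used to replace the sum of sup-norms by $\frac{1}{\gamma_k}\sum_\tau\|A_\tau\ve{x}_{k-1}-\ve{b}_\tau\|^2$. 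The step I consider the heart of the argument — and the one capturing the dependence on $\beta$ — is the combinatorial averaging of the full block residuals: writing $\|A_\tau\ve{x}_{k-1}-\ve{b}_\tau\|^2 = \sum_{i\in\tau}(\ve{r}_{k-1})_i^2$ and counting that each index $i$ lies in exactly $\binom{m-1}{\beta-1}$ of the blocks gives $\sum_\tau\|A_\tau\ve{x}_{k-1}-\ve{b}_\tau\|^2 = \binom{m-1}{\beta-1}\|\ve{r}_{k-1}\|^2$. Combining these and using the identity $\binom{m-1}{\beta-1}/\binom{m}{\beta} = \beta/m$ yields
$$\mathbb{E}_{\tau_k}\|\ve{e}_k\|^2 = \|\ve{e}_{k-1}\|^2 - \frac{\beta}{\gamma_k m}\|A\ve{e}_{k-1}\|^2.$$

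To finish the one-step bound I need $\|A\ve{e}_{k-1}\|^2 \ge \sigma_{\min}^2(A)\|\ve{e}_{k-1}\|^2$. This inequality only holds on $\text{range}(A^\top)$ (since $A$ is not assumed full rank, $\sigma_{\min}$ is its smallest nonzero singular value), so I would verify the invariance claim that $\ve{e}_j\in\text{range}(A^\top)$ for all $j$: the pseudoinverse solution satisfies $\ve{x}^* = A^\dagger\ve{b}\in\text{range}(A^\top)$, the hypothesis places $\ve{x}_0$ there as well, and every update subtracts a multiple of a row $\ve{a}_{t_k}\in\text{range}(A^\top)$, so the subspace is preserved inductively. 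This is precisely why the hypothesis $\ve{x}_0\in\text{range}(A^\top)$ is imposed, and it is the main technical point one must get right to avoid falsely invoking the singular-value bound on the null-space component. Substituting this bound gives the claimed conditional contraction factor $1 - \beta\sigma_{\min}^2(A)/(\gamma_k m)$. Finally, I would iterate: taking expectation of the conditional bound over the earlier samples and applying the tower property produces the product over $j=1,\dots,k$, completing the proof.
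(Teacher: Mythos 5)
Your proof is correct and follows essentially the same route as the paper: the paper derives Corollary~\ref{cor:SKMconvergence} by specializing Theorem~\ref{thm:gSKMconvergence} to unit-norm rows (where the distribution \eqref{eq:probdist} becomes uniform), and every step you take --- the orthogonal-projection identity, identifying the greedy progress with $\|A_{\tau_k}\ve{x}_{k-1}-\ve{b}_{\tau_k}\|_\infty^2$, averaging over uniform subsets, substituting the dynamic range, the $\binom{m-1}{\beta-1}/\binom{m}{\beta}=\beta/m$ counting, and the $\sigma_{\min}$ bound justified by the $\text{range}(A^\top)$ invariance --- appears in the same role in the paper's proof of the theorem. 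Your explicit attention to why $\ve{e}_{k-1}\in\text{range}(A^\top)$ is needed matches the paper's (more terse) remark, and the iteration via the tower property is exactly how the paper obtains the product bound in the uniform-sampling special case.
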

Corollary~\ref{cor:SKMconvergence} shows that SKM experiences at least linear convergence where the contraction term is a product of terms that are less than one and dependent on the sub-sample size $\beta$. When $\beta = 1$, as in RK, $\gamma_k$ = 1, so Corollary~\ref{cor:SKMconvergence} recovers the upper bound for RK shown in~\cite{SV09:Randomized-Kaczmarz}. However, when 
$\beta = m$ for MM, Corollary \ref{cor:SKMconvergence} offers an improved upper bound on the error over \cite{HN18Motzkin}; specifically $$\|\ve{e_k}\|^2 \le \bigg(1 - \frac{\sigma_{\min}^2(A)}{\gamma_k}\bigg)\|\ve{e}_{k-1}\|^2.$$

Our result illustrates that the progress made by an iteration of the SKM algorithm depends upon the dynamic range of the residual of that iteration.  The dynamic range of each iteration, $\gamma_j$, satisfies $$1 \le \gamma_j \le \beta.$$  Note that the upper bound, $\gamma_j = \beta$, is achieved by a constant residual where $|\ve{a}_i^T \ve{x}_{j} - b_i| = |\ve{a}_{i^\prime}^T \ve{x}_j - b_{i^\prime}|$ for all $i,i^\prime \in [m]$, while the lower bound is achieved by the residual with one nonzero entry.  As smaller $\gamma_j$ provides a smaller upper bound on the new error $\ve{e}_j$, we consider the situation with one nonzero entry in the residual as the ``best case" and the situation with a constant residual as the ``worst case."  We now compare our single iteration result in the best and worst cases to the previously known single iteration results of \cite{agmon, DLHN16SKM, HN18Motzkin, SV09:Randomized-Kaczmarz}.  These are summarized in Table \ref{tab:iterationcomparison}; we present only the contraction terms $\alpha$ such that $$\mathbb{E}_{\tau_k} \|\ve{e}_k\|^2 \le \alpha \|\ve{e}_{k-1}\|^2,$$ for each upper bound in the case that $A$ is normalized so that $\|\ve{a}_i\|^2 = 1$ for $i \in [m]$.  In particular, note that the worst case residual provides the same upper bound rate as those of \cite{SV09:Randomized-Kaczmarz, DLHN16SKM, agmon}.  

\begin{table}
\begin{center}
\begin{tabular}{| c | c | c | c | c |}
\hline
 & Best Case & Worst Case & Previous Best Case & Previous Worst Case\\
\hline
MM & $1 - \sigma_{\min}^2(A)$ 
& \multirow{3}{*}{$1 - \frac{\sigma_{\min}^2(A)}{m}$} &  $1 - \frac{\sigma_{\min}^2(A)}{4}$ \cite{HN18Motzkin} &\multirow{3}{*}{$1 - \frac{\sigma_{\min}^2(A)}{m}$ \cite{agmon, DLHN16SKM, SV09:Randomized-Kaczmarz}} \\
\hhline{--~-~}
SKM & $ 1 - \frac{\beta\sigma_{\min}^2(A)}{m}$ & & \multirow{2}{*}{$1 - \frac{\sigma_{\min}^2(A)}{m}$}  & \\
\hhline{--~~~}
RK & $ 1 - \frac{\sigma_{\min}^2(A)}{m}$ & & &   \\
\hline
\end{tabular}
\end{center}
\caption{Contraction terms $\alpha$ such that $\mathbb{E}_{\tau_k} \|\ve{e}_k\|^2 \le \alpha \|\ve{e}_{k-1}\|^2$ for the best and worst case bounds of MM, SKM, and RK.}\label{tab:iterationcomparison}
\end{table}

\section{Main Results}
Corollary \ref{cor:SKMconvergence} is a specialization of our general result to SKM with a fixed sample size $\beta$ and systems that are row-normalized.  Our general result requires neither row-normalization nor a static sample size.  However, we must additionally generalize the SKM sampling distribution for systems that are not row-normalized.  We now consider the general SKM method which samples $\beta_k$ many rows of $A$ in the $k$th iteration (according to probability distribution $p_{\ve{x}_{k-1}}$ defined in \eqref{eq:probdist}) and projects onto the hyperplane associated to the largest magnitude entry of the sampled sub-residual. 

The generalized probability distribution over the subset of rows of $A$ of size $\beta_k$ is denoted $p_{\ve{x}}: {[m] \choose \beta_k} \rightarrow [0,1).$  The sampled subset of rows of $A$, $\tau_k \sim p_{\ve{x}}$ where 
\begin{equation}
    p_{\ve{x}}(\tau_k) = \frac{\|\ve{a}_{t(\tau_k,\ve{x})}\|^2}{\sum_{\tau \in {[m] \choose \beta_k}} \|\ve{a}_{t(\tau,\ve{x})}\|^2},
    \label{eq:probdist}
\end{equation}
and $t(\tau,\ve{x}) = \argmax_{t \in \tau} (\ve{a}_{t}^\top\ve{x} - b_t)^2.$  Thus, our generalized SKM method is Algorithm \ref{alg:Kacz} with selection rule $\tau_j \sim p_{\ve{x}_{j-1}}$ and $t_j = t(\tau_j,\ve{x}_{j-1}).$ Similar to the RK probability distribution of \cite{SV09:Randomized-Kaczmarz}, the computation of~\eqref{eq:probdist} is utilized here simply to theoretically analyze the SKM algorithm without requiring normalized rows. This choice of sampling distribution conveniently simplifies the expected value computation in the proof of Theorem \ref{thm:gSKMconvergence} by cancelling the numerator of the probability with the squared norm of the sampled row. We do not suggest that this probability distribution be implemented in a real world setting as it is computationally prohibitive.  

One could instead implement a uniform distribution over rows or learn the distribution with probabilities proportional to the squared norms of the rows (as suggested in \cite{SV09:Randomized-Kaczmarz}).  Neither of these is guaranteed to coincide with the distribution defined in (\ref{eq:probdist}), due to the dependence on the iterate $\ve{x}$. However, for many datasets where the row norms are (approximately) equal, the uniform distribution (approximately) coincides with (\ref{eq:probdist}).  In particular, when the rows of $A$ all have equal norm, as in the case of incidence matrices (see Section \ref{sec:avgcons}), then (\ref{eq:probdist}) reduces to the uniform distribution over samples of size $\beta_k$.  Past works which analyze SKM \cite{DLHN16SKM, morshed2019accelerated, morshed2020generalization} assume that the rows of $A$ are normalized and that the probability distribution over the samples of size $\beta$ is uniform.  To the best of our knowledge, ours is the first work in this area to analyze an iterative projection method with an iteration dependent sampling distribution.

Our main result shows that the generalized SKM converges at least linearly in expectation with a bound that depends on the dynamic range of the sampled sub-residual, the size of the sample, and the minimum squared nonzero singular value of $A$, $\sigma_{\min}^2(A)$. In the event that there are multiple rows within the sub-residual which achieve $\max_{t \in \tau} (\ve{a}_{t}^\top\ve{x} - b_t)^2$, an arbitrary choice can be made amongst those rows and the main result will not be affected by this choice.

Theorem~\ref{thm:gSKMconvergence} provides theoretical convergence guarantees for the generalized SKM method. Whereas previous guarantees for SKM required normalized rows or fixed sample sizes $\beta$~\cite{DLHN16SKM, HN18Motzkin, morshed2019accelerated, morshed2020generalization}, the guarantees presented here do not require either assumption. In addition, the contraction term of the generalized SKM method shows dependence on the dynamic range, another feature lacking in previous works. Following the statement of the theorem, we use standard techniques in the Kaczmarz literature to prove our main result. 

We additionally describe a simple generalization of the main result in the case that the samples are not made according the generalized SKM distribution~\eqref{eq:probdist}, but instead according to a distribution $\tilde{p}(\tau)$ whose probabilities are at least a constant factor of those in \eqref{eq:probdist}. We also remark on the special case in which rows have equal norm (and thus subsets $\tau$ are selected uniformly at random), the case where $\beta_k$ is fixed, and the case in which $\beta_k = 1$ in order to make connections to previous results. Due to the dependence of the sampling distribution upon the current iterate, our main result is not easily iterable to provide the usual form of a Kaczmarz type result (e.g., $\mathbb{E}\|\ve{e}_k\|^2 \le \alpha^k \|\ve{e}_0\|^2$) so we present the bound for only a single iteration.  However, in the special cases we describe in Remarks \ref{rem:equalrows} and \ref{rem:MMoriginal} we are able to iterate the simplified expression due to the simplicity of the sampling distribution.

\begin{theorem}
Suppose the system of equations $A \ve{x} = \ve{b}$ is consistent, define $\ve{x}^* = A^\dagger \ve{b}$, and let $\ve{x}_0 \in \text{range}(A^\top)$.  Then generalized SKM converges at least linearly in expectation and the bound on the
rate depends on the dynamic range, $\gamma_k$ of the random sample of $\beta_k$ rows of $A$, $\tau_k$. Precisely, in the $k$th iteration of generalized SKM, we have 
\begin{equation*}
\mathbb{E}_{\tau_k} \|\ve{x}_k - \ve{x}^*\|^2 \le \bigg(1 - \frac{\beta_k{m \choose \beta_k} \sigma_{\min}^2(A)}{\gamma_k m \sum_{\tau \in {[m] \choose \beta_k}}\|\ve{a}_{t(\tau,\ve{x}_{k-1})}\|^2}\bigg) \|\ve{x}_{k-1} - \ve{x}^*\|^2.
\end{equation*}
\label{thm:gSKMconvergence}
\end{theorem}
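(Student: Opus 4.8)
The plan is to follow the standard Kaczmarz template: exploit that each iterate is an orthogonal projection, take a single-step expectation over the random sample $\tau_k$, and bound the expected decrease from below using the sampling distribution and the dynamic range. First I would use consistency: since $A\ve{x}^* = \ve{b}$, the point $\ve{x}^*$ lies on every hyperplane $\{\ve{x} : \ve{a}_t^\top\ve{x} = b_t\}$, so the update is the orthogonal projection of $\ve{x}_{k-1}$ onto a hyperplane containing $\ve{x}^*$. The Pythagorean identity then gives the exact per-step relation
$$\|\ve{x}_k - \ve{x}^*\|^2 = \|\ve{x}_{k-1} - \ve{x}^*\|^2 - \frac{(\ve{a}_{t_k}^\top\ve{x}_{k-1} - b_{t_k})^2}{\|\ve{a}_{t_k}\|^2},$$
and by the greedy choice $t_k = t(\tau_k,\ve{x}_{k-1})$ the numerator equals $\|A_{\tau_k}\ve{x}_{k-1} - \ve{b}_{\tau_k}\|_\infty^2$.

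Next I would take $\mathbb{E}_{\tau_k}$ of the subtracted term against the distribution \eqref{eq:probdist}. The factor $\|\ve{a}_{t(\tau,\ve{x}_{k-1})}\|^2$ in the numerator of $p_{\ve{x}_{k-1}}(\tau)$ is designed to cancel the $\|\ve{a}_{t_k}\|^2$ in the denominator of the projection step, leaving
$$\mathbb{E}_{\tau_k}\!\left[\frac{(\ve{a}_{t_k}^\top\ve{x}_{k-1} - b_{t_k})^2}{\|\ve{a}_{t_k}\|^2}\right] = \frac{\sum_{\tau}\|A_\tau\ve{x}_{k-1} - \ve{b}_\tau\|_\infty^2}{\sum_{\tau}\|\ve{a}_{t(\tau,\ve{x}_{k-1})}\|^2}.$$
The definition \eqref{eq:gammaj} of the dynamic range then converts the numerator via $\sum_\tau \|A_\tau\ve{x}_{k-1} - \ve{b}_\tau\|_\infty^2 = \gamma_k^{-1}\sum_\tau \|A_\tau\ve{x}_{k-1} - \ve{b}_\tau\|^2$.

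The remaining step is to evaluate $\sum_\tau \|A_\tau\ve{x}_{k-1} - \ve{b}_\tau\|^2$ by double counting. Writing each block norm as a sum over its rows and swapping the order of summation, each index $i \in [m]$ appears in exactly $\binom{m-1}{\beta_k-1}$ subsets of size $\beta_k$, so
$$\sum_{\tau \in \binom{[m]}{\beta_k}}\|A_\tau\ve{x}_{k-1} - \ve{b}_\tau\|^2 = \binom{m-1}{\beta_k-1}\|A\ve{x}_{k-1}-\ve{b}\|^2 = \binom{m-1}{\beta_k-1}\|A\ve{e}_{k-1}\|^2,$$
using $A\ve{x}^* = \ve{b}$. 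I would then invoke the range condition: because $\ve{x}_0 \in \text{range}(A^\top)$, every update adds a multiple of a row $\ve{a}_{t_k} \in \text{range}(A^\top)$, and $\ve{x}^* = A^\dagger\ve{b} \in \text{range}(A^\top)$, the error $\ve{e}_{k-1}$ remains in $\text{range}(A^\top)$, where $\|A\ve{e}_{k-1}\|^2 \ge \sigma_{\min}^2(A)\|\ve{e}_{k-1}\|^2$. Combining these pieces yields
$$\mathbb{E}_{\tau_k}\|\ve{e}_k\|^2 \le \left(1 - \frac{\binom{m-1}{\beta_k-1}\sigma_{\min}^2(A)}{\gamma_k \sum_{\tau}\|\ve{a}_{t(\tau,\ve{x}_{k-1})}\|^2}\right)\|\ve{e}_{k-1}\|^2,$$
and the elementary identity $\binom{m-1}{\beta_k-1} = \tfrac{\beta_k}{m}\binom{m}{\beta_k}$ rewrites this in the stated form.

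I do not expect a single deep obstacle; the argument is largely bookkeeping. The two points requiring care are the expectation computation — getting the sums over subsets right and confirming the $\|\ve{a}_{t(\tau,\ve{x}_{k-1})}\|^2$ factors cancel exactly — and the verification that $\ve{e}_{k-1} \in \text{range}(A^\top)$, which is precisely what licenses the $\sigma_{\min}^2(A)$ bound in the rank-deficient (non-unique-solution) setting and is why the hypotheses on $\ve{x}_0$ and consistency are imposed.
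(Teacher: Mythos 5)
Your proposal is correct and follows essentially the same route as the paper's proof: the Pythagorean identity for the projection step, the expectation over $\tau_k$ with the designed cancellation of $\|\ve{a}_{t(\tau,\ve{x}_{k-1})}\|^2$, the dynamic-range substitution, the double-counting identity $\binom{m-1}{\beta_k-1} = \tfrac{\beta_k}{m}\binom{m}{\beta_k}$, and the singular-value bound justified by $\ve{e}_{k-1} \in \text{range}(A^\top)$. No gaps.
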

\begin{proof} We begin by rewriting the generalized SKM iterate $\ve{x}_k$ and simplifying the resulting expression which yields
\begin{align*}
    \|\ve{x}_k - \ve{x}^*\|^2 &= \Bigg\|\ve{x}_{k-1} - \frac{\ve{a}_{t(\tau_k,\ve{x}_{k-1})}^\top \ve{x}_{k-1} - b_{t(\tau_k,\ve{x}_{k-1})}}{\|\ve{a}_{t(\tau_k,\ve{x}_{k-1})}\|^2} \ve{a}_{t(\tau_k,\ve{x}_{k-1})} - \ve{x}^*\Bigg\|^2
    \\&= \|\ve{x}_{k-1} - \ve{x}^*\|^2 - \frac{(\ve{a}_{t(\tau_k,\ve{x}_{k-1})}^\top \ve{x}_{k-1} - b_{t(\tau_k,\ve{x}_{k-1})})^2}{\|\ve{a}_{t(\tau_k,\ve{x}_{k-1})}\|^2}
    \\&= \|\ve{x}_{k-1} - \ve{x}^*\|^2 - \frac{\|A_{\tau_k} \ve{x}_{k-1} - \ve{b}_{\tau_k}\|_\infty^2}{\|\ve{a}_{t(\tau_k,\ve{x}_{k-1})}\|^2},
\end{align*}
where the first equation uses the definition of the generalized SKM iterate, and the second follows from the fact that $\ve{a}_{t(\tau_k,\ve{x}_{k-1})}^\top (\ve{x}_{k-1} - \ve{x}^*) = \ve{a}_{t(\tau_k,\ve{x}_{k-1})}^\top \ve{x}_{k-1} - b_{t(\tau_k,\ve{x}_{k-1})}$.  Note that $$\ve{a}_{t(\tau_k,\ve{x}_{k-1})}^\top \ve{x}^* = (AA^\dagger\ve{b})_{t(\tau_k,\ve{x}_{k-1})} = b_{t(\tau_k,\ve{x}_{k-1})}$$ since $\ve{b} \in \text{range}(A)$ and $AA^\dagger$ is the projector onto $\text{range}(A)$.

Now, we take expectation of both sides (with respect to the sampled $\tau_k$ according to the distribution \eqref{eq:probdist}).  This gives
\begin{align}
    \mathbb{E}_{\tau_k} \|\ve{x}_k - \ve{x}^*\|^2 &= \|\ve{x}_{k-1} - \ve{x}^*\|^2 - \mathbb{E}_{\tau_k} \frac{\|A_{\tau_k} \ve{x}_{k-1} - \ve{b}_{\tau_k}\|_\infty^2}{\|\ve{a}_{t(\tau_k,\ve{x}_{k-1})}\|^2} \nonumber
    \\&= \|\ve{x}_{k-1} - \ve{x}^*\|^2 - \sum_{\tau \in {[m] \choose \beta_k}} p_{\ve{x}_{k-1}}(\tau) \frac{\|A_{\tau} \ve{x}_{k-1} - \ve{b}_{\tau}\|_\infty^2}{\|\ve{a}_{t(\tau,\ve{x}_{k-1})}\|^2} \nonumber
    \\&=\|\ve{x}_{k-1} - \ve{x}^*\|^2 - \sum_{\tau \in {[m] \choose \beta_k}} \frac{\|\ve{a}_{t(\tau,\ve{x}_{k-1})}\|^2}{\sum_{\pi \in {[m] \choose \beta_k}} \|\ve{a}_{t(\pi,\ve{x}_{k-1})}\|^2} \frac{\|A_{\tau} \ve{x}_{k-1} - \ve{b}_{\tau}\|_\infty^2}{\|\ve{a}_{t(\tau,\ve{x}_{k-1})}\|^2} \label{eq:probchangeremark}
    \\&=\|\ve{x}_{k-1} - \ve{x}^*\|^2 - \frac{1}{\sum_{\pi \in {[m] \choose \beta_k}} \|\ve{a}_{t(\pi,\ve{x}_{k-1})}\|^2}\sum_{\tau \in {[m] \choose \beta_k}} \|A_{\tau} \ve{x}_{k-1} - \ve{b}_{\tau}\|_\infty^2 \nonumber
    \\&= \|\ve{x}_{k-1} - \ve{x}^*\|^2 - \frac{1}{\gamma_k \sum_{\pi \in {[m] \choose \beta_k}} \|\ve{a}_{t(\pi,\ve{x}_{k-1})}\|^2}\sum_{\tau \in {[m] \choose \beta_k}} \|A_{\tau} \ve{x}_{k-1} - \ve{b}_{\tau}\|^2 \nonumber
    \\&= \|\ve{x}_k - \ve{x}^*\|^2 - \frac{{m \choose \beta_k}\beta_k}{\gamma_k m \sum_{\pi \in {[m] \choose \beta_k}} \|\ve{a}_{t(\pi,\ve{x}_{k-1})}\|^2} \|A \ve{x}_{k-1} - \ve{b}\|^2 \nonumber
    \\&\le \Bigg(1 - \frac{{m \choose \beta_k}\beta_k \sigma_{\min}^2(A)}{\gamma_k m \sum_{\pi \in {[m] \choose \beta_k}} \|\ve{a}_{t(\pi,\ve{x}_{k-1})}\|^2}\Bigg)\|\ve{x}_{k-1} - \ve{x}^*\|^2, \nonumber
\end{align}
where the last line follows from standard properties of singular values and the fact that $\ve{x}_{k-1} \in \text{range}(A^\top)$ (since $\ve{x}_0 \in \text{range}(A^\top)$ and  the SKM update preserves membership in $\text{range}(A^\top)$). This completes our proof.
\end{proof}

Now, we provide a corollary of the previous result which provides a bound on the expected error for the SKM algorithm which samples subsets of rows $\tau_k$ according to an alternate probability distribution $\tilde{p}(\tau)$ satisfying $\tilde{p}(\tau) \ge \epsilon p_{\ve{x}_{k-1}}(\tau)$ for all $\tau \in {[m] \choose \beta_k}$.  In this case, we can exploit the relationship between probabilities to reuse the proof of Theorem~\ref{thm:gSKMconvergence}.  Provided that the probability distribution $\tilde{p}(\tau)$ is fixed between iterations we can iterate the bound unlike in Theorem~\ref{thm:gSKMconvergence}.  
An application of Corollary~\ref{cor:alternateprob} with the uniform distribution is given in Remark~\ref{rem:unif}.

\begin{corollary} 
Suppose the system of equations $A \ve{x} = \ve{b}$ is consistent, define $\ve{x}^* = A^\dagger \ve{b}$, and let $\ve{x}_0 \in \text{range}(A^\top)$.
Suppose one runs SKM with $\tau_k \sim \tilde{p}(\tau)$ and $t_k = t(\tau_k,\ve{x}_{k-1})$ and that the probabilities used to sample, $\tilde{p}(\tau)$, are at least a constant factor of the probabilities~\eqref{eq:probdist}; that is $\tilde{p}(\tau) \ge \epsilon p_{\ve{x}_{k-1}}(\tau)$ for all $\tau \in {[m] \choose \beta_k}$. Then we have that 
\begin{equation}
    \tilde{\mathbb{E}}_{\tau_k} \|\ve{x}_k - \ve{x}^*\|^2 \le \left(1 - \frac{\epsilon {m \choose \beta_k}\beta_k \sigma_{\min}^2(A)}{\gamma_k m \sum_{\pi \in {[m] \choose \beta_k}} \|\ve{a}_{t(\pi,\ve{x}_{k-1})}\|^2}\right)\|\ve{x}_{k-1} - \ve{x}^*\|^2 \label{eq:alternateprob}
\end{equation}
where $\tilde{\mathbb{E}}_{\tau_k}$ denotes expectation taken with respect to the sampling of $\tau_k$ according to $\tilde{p}(\tau)$ and conditioned on the choices of $\tau_j$ for $j < k$.  Furthermore, if $\tilde{p}(\tau)$ is constant between iterations (so $\beta_j = \beta$ is constant) and independent of $\ve{x}_{k-1}$, we can iterate the previous result and have 
\begin{equation}
\tilde{\mathbb{E}} \|\ve{x}_k - \ve{x}^*\|^2 \le \prod_{j=1}^k \left(1 - \frac{\epsilon {m \choose \beta}\beta \sigma_{\min}^2(A)}{\gamma_j m \sum_{\pi \in {[m] \choose \beta}} \|\ve{a}_{t(\pi,\ve{x}_{j-1})}\|^2}\right) \|\ve{x}_{0} - \ve{x}^*\|^2 \label{eq:alternateprobiterated}
\end{equation}
where $\tilde{\mathbb{E}}$ denotes expectation taken with respect to all samples of $\tau_j$ for $j = 1, ..., k$. \label{cor:alternateprob}
\end{corollary}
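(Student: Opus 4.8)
The plan is to reuse the computation in the proof of Theorem~\ref{thm:gSKMconvergence} almost verbatim, since the sampling distribution enters only at the expectation step. First I would reproduce, exactly as there, the conditional one-step identity
\[
\tilde{\mathbb{E}}_{\tau_k}\|\ve{x}_k - \ve{x}^*\|^2 = \|\ve{x}_{k-1} - \ve{x}^*\|^2 - \sum_{\tau \in {[m] \choose \beta_k}} \tilde{p}(\tau)\,\frac{\|A_{\tau} \ve{x}_{k-1} - \ve{b}_{\tau}\|_\infty^2}{\|\ve{a}_{t(\tau,\ve{x}_{k-1})}\|^2},
\]
where now the weights are $\tilde{p}(\tau)$ rather than $p_{\ve{x}_{k-1}}(\tau)$. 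The crucial observation is that every summand is nonnegative, being a ratio of squared norms, so the hypothesis $\tilde{p}(\tau) \ge \epsilon\, p_{\ve{x}_{k-1}}(\tau)$ may be applied termwise to lower bound the subtracted sum by $\epsilon$ times the corresponding sum with weights $p_{\ve{x}_{k-1}}(\tau)$. This is the only new ingredient, and it is exactly where the factor $\epsilon$ is produced.

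With $\epsilon$ pulled out, the remaining sum is precisely the quantity simplified in the theorem, so I would invoke the identical chain of manipulations: cancel $\|\ve{a}_{t(\tau,\ve{x}_{k-1})}\|^2$ against the numerator coming from $p_{\ve{x}_{k-1}}(\tau)$, introduce $\gamma_k$ through its definition~\eqref{eq:gammaj}, use the combinatorial identity $\sum_{\tau} \|A_{\tau}\ve{x}_{k-1}-\ve{b}_{\tau}\|^2 = {m \choose \beta_k}\tfrac{\beta_k}{m}\|A\ve{x}_{k-1}-\ve{b}\|^2$ (each row lies in $\binom{m-1}{\beta_k-1}$ of the subsets), and finally apply $\ve{x}_{k-1}\in\text{range}(A^\top)$ together with $\|A(\ve{x}_{k-1}-\ve{x}^*)\|^2 \ge \sigma_{\min}^2(A)\|\ve{x}_{k-1}-\ve{x}^*\|^2$. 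Since every inequality invoked only shrinks the subtracted term (and $\epsilon \le 1$), no sign is reversed, and the result is \eqref{eq:alternateprob}. Setting $\tilde{p}=p_{\ve{x}_{k-1}}$ and $\epsilon=1$ recovers Theorem~\ref{thm:gSKMconvergence} as a sanity check.

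For the iterated bound \eqref{eq:alternateprobiterated}, the extra hypotheses are that $\beta_k=\beta$ is constant and that $\tilde{p}$ is fixed across iterations and independent of the current iterate. The plan is to apply the law of total expectation: \eqref{eq:alternateprob} holds conditionally on the information $\mathcal{F}_{k-1}$ generated by $\tau_1,\dots,\tau_{k-1}$, and I would peel off one iteration at a time, taking $\tilde{\mathbb{E}}_{\tau_k}$ innermost and then $\tilde{\mathbb{E}}_{\tau_{k-1}},\dots,\tilde{\mathbb{E}}_{\tau_1}$, chaining the $k$ conditional contractions down to $\|\ve{x}_0-\ve{x}^*\|^2$. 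The role of the fixed, iterate-independent $\tilde{p}$ is exactly to guarantee that \eqref{eq:alternateprob} is available at every step of this recursion with the same form.

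The main obstacle is bookkeeping about what is random rather than any hard estimate. The per-iteration factor $\alpha_j = 1 - \frac{\epsilon {m \choose \beta}\beta \sigma_{\min}^2(A)}{\gamma_j m \sum_{\pi}\|\ve{a}_{t(\pi,\ve{x}_{j-1})}\|^2}$ is itself a random variable, since $\gamma_j$ and the row-norm sum depend on the realized iterate $\ve{x}_{j-1}$ and hence on $\tau_1,\dots,\tau_{j-1}$. Consequently these factors do not factor out of the outer expectation as constants, so the product on the right of \eqref{eq:alternateprobiterated} must be read as a product of $\mathcal{F}_{j-1}$-measurable factors carried along the realized trajectory, i.e.\ an expectation of a product and not a product of expectations. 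What makes the peeling legitimate is precisely that $\tilde{p}$ does not depend on the iterate: conditioning on the history renders $\alpha_k$ deterministic while keeping the one-step bound \eqref{eq:alternateprob} applicable, and each factor is at most one, so the recursion is well behaved. I would state this interpretation explicitly when presenting \eqref{eq:alternateprobiterated}.
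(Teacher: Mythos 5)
Your proposal is correct and follows essentially the same route as the paper: the paper's proof likewise reuses the computation from Theorem~\ref{thm:gSKMconvergence} verbatim, replacing $p_{\ve{x}_{k-1}}(\tau)$ with $\tilde{p}(\tau)$, turning the equality at \eqref{eq:probchangeremark} into an inequality via the termwise bound $\tilde{p}(\tau)\ge\epsilon\, p_{\ve{x}_{k-1}}(\tau)$ (legitimate since each summand is nonnegative), carrying the $\epsilon$ through the remaining lines, and obtaining \eqref{eq:alternateprobiterated} by recursively applying the conditional one-step bounds. Your added caveat that the per-iteration factors are $\mathcal{F}_{j-1}$-measurable random variables, so the right-hand side of \eqref{eq:alternateprobiterated} must be read as an expectation of a product along the trajectory rather than a product of constants, is a correct clarification that the paper leaves implicit.
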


\begin{proof}
    This proof is identical to that of Theorem~\ref{thm:gSKMconvergence} but where we first replace $\mathbb{E}_{\tau_k}$ with $\tilde{\mathbb{E}}_{\tau_k}$ and $p_{\ve{x}_{k-1}}(\tau)$ with $\tilde{p}(\tau)$.  We replace the equation in \eqref{eq:probchangeremark} with an inequality and must add an $\epsilon$ to the numerator of the subtracted term in each line from \eqref{eq:probchangeremark} on.  The iterated bound follows from recursively applying the bounds on the conditional expectations of each iteration.
\end{proof}

\begin{remark} 
\label{rem:unif}
(Uniform probability distribution)
We consider the case of the uniform distribution over samples, i.e., $\tilde{p}(\tau) = 1/{m \choose \beta_k}$.   We note that since $$\frac{\min_{i \in [m]} \|\ve{a}_i\|^2}{{m \choose \beta_k} \max_{i \in [m]} \|\ve{a}_i\|^2} \le p_{\ve{x}_{k-1}}(\tau) \le \frac{\max_{i \in [m]} \|\ve{a}_i\|^2}{{m \choose \beta_k} \min_{i \in [m]} \|\ve{a}_i\|^2}$$ when $min_{i \in [m]} \|\ve{a}_i\|^2 > 0$, we have $$\tilde{p}(\tau) \ge \frac{\min_{i \in [m]} \|\ve{a}_i\|^2}{\max_{i \in [m]} \|\ve{a}_i\|^2} p_{\ve{x}_{k-1}}(\tau).$$  Thus, Corollary~\ref{cor:alternateprob} holds for the uniform distribution with $\epsilon = \min_{i \in [m]} \|\ve{a}_i\|^2/\max_{i \in [m]} \|\ve{a}_i\|^2$.  We note that this additionally provides a convergence analysis for RK of \cite{SV09:Randomized-Kaczmarz} in the case that the matrix $A$ has unnormalized rows and the uniform distribution over rows is employed in sampling.
\end{remark}

The next remarks make simplifying assumptions on the generalized SKM algorithm and our main result to provide better context for comparison with previous works.

\begin{remark} (Recovery of RK guarantees)
If all of the rows of $A$ have equal norm (not necessarily unit norm), then our result specializes to 
\begin{equation}
\mathbb{E}_{\tau_k} \|\ve{x}_k - \ve{x}^*\|^2 \le \Bigg(1 - \frac{\beta_k \sigma_{\min}^2(A)}{\gamma_k \|A\|_F^2}\Bigg)\|\ve{x}_{k-1} - \ve{x}^*\|^2,
\label{eq:equalrows}
\end{equation}
and we can iteratively apply this per-iteration guarantee to give a bound on the error in expectation with respect to all samples, 
\begin{equation}
\mathbb{E} \|\ve{x}_k - \ve{x}^*\|^2 \le \prod_{j=1}^k\Bigg(1 - \frac{\beta_j \sigma_{\min}^2(A)}{\gamma_j \|A\|_F^2}\Bigg)\|\ve{x}_{0} - \ve{x}^*\|^2.
\label{eq:equalrows2}
\end{equation}
Additionally, when $\beta_k = 1$, the sampling distribution~\eqref{eq:probdist} and theoretical error upper bound~\eqref{eq:equalrows} simplifies to the probability distribution and error guarantees of \cite{SV09:Randomized-Kaczmarz}. \label{rem:equalrows}
\end{remark}

\begin{remark} (Improvement of MM guarantees)
Corollary~\ref{cor:SKMconvergence} is obtained from Theorem~\ref{thm:gSKMconvergence} when rows of $A$ have unit norm and $\beta_k = \beta$. When $\beta_k = m$, then an improved convergence rate of
\begin{equation*}
 \|\ve{x}_k - \ve{x}^*\|^2 \le \Bigg(1 - \frac{\sigma_{\min}^2(A)}{\gamma_k}\Bigg)\|\ve{x}_{k-1} - \ve{x}^*\|^2 \le \prod_{j=1}^k \Bigg(1 - \frac{\sigma_{\min}^2(A)}{\gamma_j}\Bigg)\|\ve{x}_{0} - \ve{x}^*\|^2,
\label{eq:MMimprovement}
\end{equation*}
for MM over that shown in~\cite{HN18Motzkin},
\begin{equation*}
 \|\ve{x}_k - \ve{x}^*\|^2 \le \Bigg(1 - \frac{\sigma_{\min}^2(A)}{4\gamma_k}\Bigg)\|\ve{x}_{k-1} - \ve{x}^*\|^2 \le \prod_{j=1}^k \Bigg(1 - \frac{\sigma_{\min}^2(A)}{4\gamma_j}\Bigg)\|\ve{x}_{0} - \ve{x}^*\|^2,
\label{eq:MMoriginal}
\end{equation*}
is obtained. \label{rem:MMoriginal}
\end{remark}

\begin{remark} (Connection to Block RK)
Note that this bound on the convergence rate of SKM additionally provides a bound on the convergence rate of a block Kaczmarz variant. This variant is distinct from the block Kaczmarz method considered in \cite{needell2013paved}.  The analysis of \cite{needell2013paved} requires a pre-partitioned row paving, while the variant considered here allows the blocks to be sampled randomly and not pre-partitioned. Consider the block Kaczmarz variant which in each iteration selects a block of $\beta_k$ rows of $A$, $\tau_k$, and projects the previous iterate into the solution space of the entire block of $\beta_k$ equations.  This variant necessarily converges faster than SKM as it makes more progress in each iteration.  In particular, note that $\{\ve{x} | A_{\tau_k} \ve{x} = \ve{b}_{\tau_k}\} \subset \{\ve{x} | \ve{a}_{t(\tau_k,\ve{y})}^T \ve{x} = b_{t(\tau_k,\ve{y})}\}.$ Given iterate $\ve{x}_{k-1}$ and sample of rows $\tau_k$, let $\ve{x}_k^{\text{SKM}}$ denote the iterate produced by SKM and $\ve{x}_k^{\text{BK}}$ denote the iterate produced by this block Kaczmarz variant.  Note that $\ve{x}_k^{\text{SKM}}$ is the closest point to $\ve{x}_{k-1}$ on the hyperplane associated to equation $t(\tau_k,\ve{x}_{k-1})$ so, since $\ve{x}_k^{\text{BK}}$ also lies on this hyperplane, we have $$\|\ve{x}_k^{\text{SKM}} - \ve{x}_{k-1}\|^2 \le \|\ve{x}_k^{\text{BK}} - \ve{x}_{k-1}\|^2.$$  Now, we note that by orthogonality of the projections, we have $$\|\ve{x}_k^{\text{SKM}} - \ve{x}_{k-1}\|^2 + \|\ve{x}_k^{\text{SKM}} - \ve{x}^*\|^2 = \|\ve{x}_{k-1} - \ve{x}^*\|^2 = \|\ve{x}_k^{\text{BK}} - \ve{x}_{k-1}\|^2 + \|\ve{x}_k^{\text{BK}} - \ve{x}^*\|^2$$ so by the above inequality, we have $$\|\ve{x}_k^{\text{BK}} - \ve{x}^*\|^2 \le \|\ve{x}_k^{\text{SKM}} - \ve{x}^*\|^2.$$
A visualization of this situation is presented in Figure~\ref{fig:blockkacz}.  Thus, the progress made by BK in any fixed iteration is at least as large as the progress made by SKM, so it must converge at least as quickly.
\end{remark}

\begin{figure}
    \centering
    \includegraphics[width=0.6\textwidth]{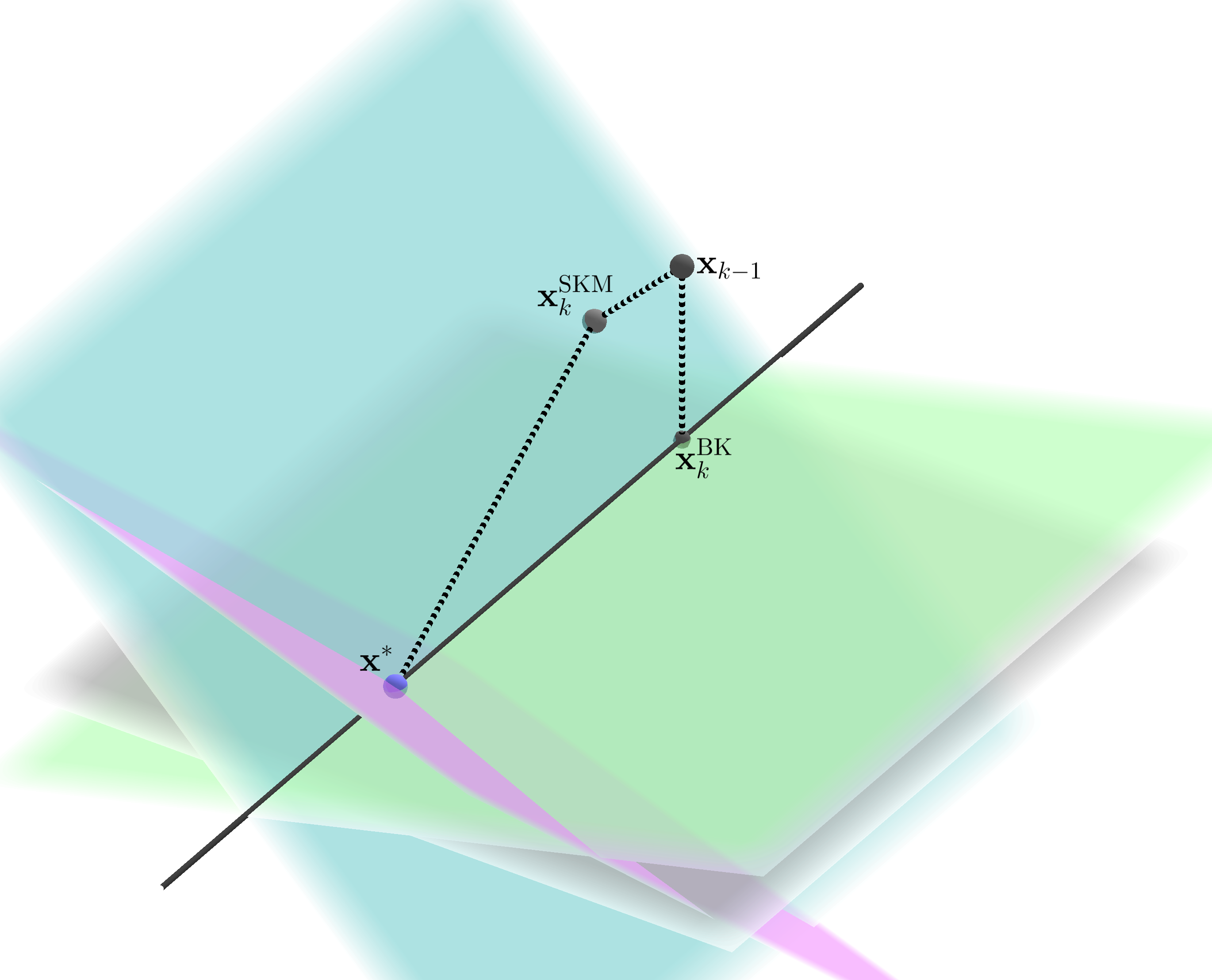}
    \caption{The SKM and BK iterates, $\ve{x}_k^{\text{SKM}}$ and $\ve{x}_k^{\text{BK}}$, generated by one iteration starting at $\ve{x}_{k-1}$ satisfy $\|\ve{x}_k^{\text{BK}} - \ve{x}^*\|^2 \le \|\ve{x}_k^{\text{SKM}} - \ve{x}^*\|^2$.}
    \label{fig:blockkacz}
\end{figure}

One may be assured that the contraction term in Theorem~\eqref{thm:gSKMconvergence} is always strictly positive.  We prove this simple fact in Proposition~\ref{prop:gammabound}.

\begin{proposition}
For any matrix $A$ defining a consistent system with $\ve{x}^* = A^\dagger \ve{b}$ and $\ve{x}_{j-1} \in \text{range}(A^\top)$, we have $$\gamma_j \ge \frac{\beta_j {m \choose \beta_j} \sigma_{\min}^2(A)}{m \sum_{\tau \in {[m] \choose \beta_j}}\|\ve{a}_{t(\tau,\ve{x}_{j-1})}\|^2}.$$ \label{prop:gammabound}
\end{proposition}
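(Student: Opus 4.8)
The plan is to reduce the claimed inequality to an elementary comparison between the residual norm and the error norm, then dispatch it with Cauchy--Schwarz and a standard singular value bound. Write $\ve{r}_{j-1} = A\ve{x}_{j-1} - \ve{b}$ and $\ve{e}_{j-1} = \ve{x}_{j-1} - \ve{x}^*$, and abbreviate $S := \sum_{\tau \in {[m] \choose \beta_j}} \|\ve{a}_{t(\tau,\ve{x}_{j-1})}\|^2$, $N := \sum_{\tau \in {[m] \choose \beta_j}} \|A_\tau \ve{x}_{j-1} - \ve{b}_\tau\|^2$, and $D := \sum_{\tau \in {[m] \choose \beta_j}} \|A_\tau \ve{x}_{j-1} - \ve{b}_\tau\|_\infty^2$, so that $\gamma_j = N/D$ by \eqref{eq:gammaj}.

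First I would establish the combinatorial identity $N = \frac{\beta_j {m \choose \beta_j}}{m}\|\ve{r}_{j-1}\|^2$ already used in the proof of Theorem~\ref{thm:gSKMconvergence}: expanding $N = \sum_{\tau} \sum_{i \in \tau}(\ve{a}_i^\top \ve{x}_{j-1} - b_i)^2$ and counting that each index $i \in [m]$ lies in exactly ${m-1 \choose \beta_j - 1}$ subsets of size $\beta_j$ gives $N = {m-1 \choose \beta_j - 1}\|\ve{r}_{j-1}\|^2$, while ${m-1 \choose \beta_j-1} = \frac{\beta_j}{m}{m \choose \beta_j}$. Substituting this into $\gamma_j = N/D$, the claim $\gamma_j \ge \frac{\beta_j {m \choose \beta_j}\sigma_{\min}^2(A)}{m S}$ becomes, after cancelling the common positive factor $\frac{\beta_j {m \choose \beta_j}}{m}$, exactly equivalent to the clean inequality
$$\|\ve{r}_{j-1}\|^2 \, S \ge \sigma_{\min}^2(A)\, D.$$

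To prove this I would bound the two sides separately. For the right-hand side, since $t(\tau,\ve{x}_{j-1})$ maximizes $(\ve{a}_t^\top \ve{x}_{j-1} - b_t)^2$ over $t \in \tau$, consistency ($b_t = \ve{a}_t^\top \ve{x}^*$) gives $\|A_\tau \ve{x}_{j-1} - \ve{b}_\tau\|_\infty^2 = (\ve{a}_{t(\tau,\ve{x}_{j-1})}^\top \ve{e}_{j-1})^2$, and Cauchy--Schwarz yields $(\ve{a}_{t(\tau,\ve{x}_{j-1})}^\top \ve{e}_{j-1})^2 \le \|\ve{a}_{t(\tau,\ve{x}_{j-1})}\|^2 \|\ve{e}_{j-1}\|^2$; summing over $\tau$ gives $D \le \|\ve{e}_{j-1}\|^2 S$. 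For the left-hand side, the standard bound $\|\ve{r}_{j-1}\|^2 = \|A\ve{e}_{j-1}\|^2 \ge \sigma_{\min}^2(A)\|\ve{e}_{j-1}\|^2$ holds because $\ve{e}_{j-1} \in \text{range}(A^\top)$ (inherited from $\ve{x}_{j-1} \in \text{range}(A^\top)$ and $\ve{x}^* = A^\dagger \ve{b} \in \text{range}(A^\top)$). Chaining these, $\|\ve{r}_{j-1}\|^2 S \ge \sigma_{\min}^2(A)\|\ve{e}_{j-1}\|^2 S \ge \sigma_{\min}^2(A) D$, as required.

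I do not expect a serious obstacle; the only point to watch is the degenerate case $\ve{x}_{j-1} = \ve{x}^*$, where $D = 0$ and $\gamma_j$ is a $0/0$ indeterminate, so the statement should be read as holding whenever the iteration has not already terminated (equivalently, whenever $\gamma_j$ is defined). The genuine content is the reduction in the second step, after which the Cauchy--Schwarz estimate on each sampled maximizer $t(\tau,\ve{x}_{j-1})$ is the crux, as it is precisely what introduces the row norms $\|\ve{a}_{t(\tau,\ve{x}_{j-1})}\|^2$ appearing in $S$. I would also note in passing that the same conclusion can be read off the proof of Theorem~\ref{thm:gSKMconvergence} more indirectly: the penultimate display there equals a genuine expected squared norm and is therefore nonnegative, which forces the subtracted contraction term to be at most $\|\ve{e}_{j-1}\|^2$ and delivers the bound after the identical singular value estimate.
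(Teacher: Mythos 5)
Your proof is correct and is essentially the paper's own argument: the same counting identity rewriting the numerator as $\tfrac{\beta_j}{m}\binom{m}{\beta_j}\|A\ve{e}_{j-1}\|^2$, the same Cauchy--Schwarz bound on each maximizer term $(\ve{a}_{t(\tau,\ve{x}_{j-1})}^\top\ve{e}_{j-1})^2$, and the same singular-value lower bound using $\ve{e}_{j-1}\in\text{range}(A^\top)$, merely rearranged into a two-sided comparison rather than manipulated as a single fraction. Your remark on the degenerate case $\ve{x}_{j-1}=\ve{x}^*$ is a minor point the paper leaves implicit, but it does not change the substance.
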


\begin{proof}
    Beginning with the definition of $\gamma_j$, we have
\begin{align*}
    \frac{\sum_{\tau_j \in {[m] \choose \beta_j}} \|A_{\tau_j}\ve{x}_{j-1} - \ve{b}_{\tau_j}\|^2}{\sum_{\tau_j \in {[m] \choose \beta_j}} \|A_{\tau_j}\ve{x}_{j-1} - \ve{b}_{\tau_j}\|_\infty^2} &= \frac{\frac{\beta_j}{m} {m \choose \beta_j} \|A(\ve{x}_{j-1} - \ve{x}^*)\|^2}{\sum_{\tau_j \in {[m] \choose \beta_j}} |\ve{a}_{t(\tau_j,\ve{x}_{j-1})}^\top (\ve{x}_{j-1} - \ve{x}^*)|^2}
    \\&\ge \frac{\frac{\beta_j}{m} {m \choose \beta_j} \sigma_{\min}^2(A) \|\ve{x}_{j-1} - \ve{x}^*\|^2}{\sum_{\tau_j \in {[m] \choose \beta_j}} \|\ve{a}_{t(\tau_j,\ve{x}_{j-1})}\|^2 \|\ve{x}_{j-1} - \ve{x}^*\|^2}
    \\&= \frac{\beta_j {m \choose \beta_j} \sigma_{\min}^2(A)}{m \sum_{\tau \in {[m] \choose \beta_j}}\|\ve{a}_{t(\tau,\ve{x}_{j-1})}\|^2},
\end{align*}
where the inequality follows from properties of singular values and Cauchy-Schwartz.
\end{proof}

Because Theorem~\ref{thm:gSKMconvergence} shows that the contraction coefficient for generalized SKM is dependent on the dynamic range, the following section discusses bounds on the dynamic range for special types of linear systems.

\section{Analysis of the Dynamic Range}
Since the dynamic range plays an integral part in the convergence behavior for generalized SKM, the dynamic range is analyzed here for different types of specialized linear systems. Note that the dynamic range has also appeared in other works, although not under the guise of ``dynamic range". For example, in~\cite{bai2018greedy} the authors proposed a Greedy Randomized Kaczmarz (GRK) algorithm that finds a subset of indices to randomly select the next row to project onto. The operation of finding this subset relies on a ratio between the $\ell_\infty$ and $\ell_2$ norms of the residual at the current iteration, essentially using a proxy of the dynamic range. In the next section, we analyze the dynamic range for random Gaussian linear systems and remark on the extension to other random linear systems. {In the following section, we analyze the dynamic range for linear systems encoding average consensus problems on directed graphs via the incidence matrix.}

\subsection{Gaussian Matrices} When entries of the measurement matrix $A$ are drawn i.i.d. from a standard Gaussian distribution, it can be shown that the dynamic range is upper bounded by $\mathcal{O} (n\beta/\log \beta)$. The proof of the upper bound of $\gamma_k$ is similar to Lemma 2 of ~\cite{HN18Motzkin}, where the authors analyze the dynamic range for $\beta=m$. Here, we generalized the bound for varying samples sizes $\beta_k$.

\begin{proposition} Let $A \in \mathbb{R}^{m\times n}$ be a random Gaussian matrix with $a_{ij} \sim \mathcal{N}(0,\sigma^2)$. For each subset $\tau \in {[m] \choose \beta_k}$, let $I_\tau \subseteq \tau$ denote the set of rows in $\tau$ that are independent of $\ve{x}$ and note $|I_\tau| \leq \beta_k$. Assuming there is at least $m^\prime$ rows in $[m]$ which are independent of $\ve{x}$, the dynamic range can be upper bounded as:
\begin{equation}
    \gamma_j = \frac{\sum_{\tau \in {[m] \choose \beta}} \mathbb{E}_a\|A_{\tau}\ve{x}\|^2}{\sum_{\tau \in {[m] \choose \beta}} \mathbb{E}_a\|A_{\tau}\ve{x}\|_\infty^2} \leq \frac{{m \choose \beta_k} \left( \beta_k n + \sum_{i \in \tau \setminus I_{\tau} }\|\ve{a}_{i}\|^2/\sigma^2\right) }{{m^\prime \choose \beta_k} \log(\beta_k)}.
    \label{eq:gaussGamma}
\end{equation}
\end{proposition}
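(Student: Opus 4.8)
The plan is to bound the numerator and denominator of the expected dynamic range separately. Write $N := \sum_{\tau \in \binom{[m]}{\beta_k}} \E_a\|A_{\tau}\ve{x}\|^2$ and $D := \sum_{\tau \in \binom{[m]}{\beta_k}} \E_a\|A_{\tau}\ve{x}\|_\infty^2$, so that by definition $\gamma_j = N/D$. Since the system is consistent the relevant residual entries are $\ve{a}_i^\top\ve{x} = \ve{a}_i^\top(\ve{x}_{j-1}-\ve{x}^*)$, and both $N$ and $D$ scale like $\sigma^2\|\ve{x}\|^2$; this common factor will cancel at the end, so I will simply carry it symbolically. The goal is to establish $N \le \sigma^2\|\ve{x}\|^2\binom{m}{\beta_k}\bigl(\beta_k n + \sum_{i \in \tau\setminus I_\tau}\|\ve{a}_i\|^2/\sigma^2\bigr)$ and $D \ge \sigma^2\|\ve{x}\|^2\binom{m'}{\beta_k}\log(\beta_k)$, after which dividing yields \eqref{eq:gaussGamma}.

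For the numerator I would apply Cauchy--Schwarz entrywise, $(\ve{a}_i^\top\ve{x})^2 \le \|\ve{a}_i\|^2\|\ve{x}\|^2$, giving $\|A_{\tau}\ve{x}\|^2 \le \|\ve{x}\|^2\sum_{i\in\tau}\|\ve{a}_i\|^2$. For a row $i \in I_\tau$ that is independent of $\ve{x}$, its norm is independent of $\ve{x}$ and $\E_a\|\ve{a}_i\|^2 = n\sigma^2$, whereas for the remaining rows $i \in \tau\setminus I_\tau$ I retain the realized norm $\|\ve{a}_i\|^2$. Using $|I_\tau| \le \beta_k$ this produces the per-subset estimate $\E_a\|A_{\tau}\ve{x}\|^2 \le \sigma^2\|\ve{x}\|^2\bigl(\beta_k n + \sum_{i\in\tau\setminus I_\tau}\|\ve{a}_i\|^2/\sigma^2\bigr)$, and summing over the $\binom{m}{\beta_k}$ subsets gives the stated numerator bound. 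This part is routine.

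The denominator is where the essential work lies, and it is what drives both the $\log(\beta_k)$ gain and the appearance of $m'$. I would discard all but the $\binom{m'}{\beta_k}$ subsets $\tau$ lying entirely within the rows independent of $\ve{x}$, which only decreases the sum by nonnegativity. For each such $\tau$ the sketched residual entries $\{\ve{a}_i^\top\ve{x}\}_{i\in\tau}$ are i.i.d.\ centered Gaussians of variance $\sigma^2\|\ve{x}\|^2$, so $\E_a\|A_{\tau}\ve{x}\|_\infty^2 = \E_a\max_{i\in\tau}(\ve{a}_i^\top\ve{x})^2$ is the expected maximum of $\beta_k$ squared i.i.d.\ Gaussians. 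The crux is an extreme-value lower bound of the form $\E_a\max_{i\le\beta_k}(\ve{a}_i^\top\ve{x})^2 \ge \sigma^2\|\ve{x}\|^2\log(\beta_k)$, which I would obtain by Jensen's inequality together with the standard estimate $\E_a\max_{i}|\ve{a}_i^\top\ve{x}| \gtrsim \sigma\|\ve{x}\|\sqrt{\log(\beta_k)}$. This is the step I expect to be the main obstacle, and it mirrors the argument of Lemma~2 in \cite{HN18Motzkin} (established there for $\beta=m$); multiplying by the $\binom{m'}{\beta_k}$ retained subsets delivers the lower bound on $D$.

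Finally, dividing the numerator bound by the denominator bound cancels the common factor $\sigma^2\|\ve{x}\|^2$ and yields \eqref{eq:gaussGamma}. The restriction to independent subsets in the denominator is a one-line monotonicity observation and the numerator estimate is a direct Cauchy--Schwarz computation, so the only genuine difficulty is the Gaussian maximal lower bound. I note that the argument tacitly requires $\beta_k \le m'$ so that the retained family of subsets is nonempty, which is consistent with the hypothesis that at least $m'$ rows are independent of $\ve{x}$.
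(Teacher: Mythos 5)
Your proposal is correct and follows essentially the same route as the paper's proof: Cauchy--Schwarz on the numerator with the split into rows in $I_\tau$ versus $\tau \setminus I_\tau$, and for the denominator a restriction to the $\binom{m'}{\beta_k}$ subsets of fully independent rows followed by Jensen's inequality and the standard Gaussian maximal lower bound $\mathbb{E}\max_i g_i \gtrsim \sigma\sqrt{\log N}$. The paper likewise sets $\ve{x}^* = 0$ without loss of generality and cancels the common $\sigma^2\|\ve{x}\|^2$ factor at the end, so no substantive difference remains.
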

\begin{remark}
Note that the factor ${m \choose \beta_k} / {m^\prime \choose \beta_k}$ is $\mathcal{O}(1)$ as $m \rightarrow \infty$ since
\begin{equation*}
    \frac{{m \choose \beta_k}}{{m^\prime \choose \beta_k}} = \frac{m!}{\beta_k!(m-\beta_k)!} \frac{\beta_k! (m - j - \beta_k)!}{(m-j)!} = \prod_{i = 0}^j  \frac{m-i}{m-\beta_k - i}.
\end{equation*} Thus, we conclude that the expected dynamic range for any iteration $k$ is $\mathcal{O} (n\beta_k / \log(\beta_k)).$
\end{remark}
\begin{proof} Without loss of generality, we let the solution to the system $\ve{x}^* = 0$ so that $\ve{b} = 0$.  We are then interested in finding an upper bound on the dynamic range~\eqref{eq:gammaj} in expectation. Here, the expectation is taken with respect to the random i.i.d. draws of the entries of $A$. To that end, we derive upper bounds and lower bounds on the numerator and denominator of~\eqref{eq:gammaj}. Starting with the upper bound on the numerator we have 
\begin{align*}
    \sum_{\tau \in {[m] \choose \beta_k}} \mathbb{E}_a\|A_{\tau}\ve{x} \|^2 &\leq \sum_{\tau \in {[m] \choose \beta}} \sum_{i \in \tau }\mathbb{E}_a\left(\|\ve{a}_{i}\|^2 \| \ve{x} \|^2\right) \\
    &= \sum_{\tau \in {[m] \choose \beta_k}} \left( \sum_{i \in I_\tau} \mathbb{E}_a\|\ve{a}_{i}\|^2 \| \ve{x} \|^2 + \sum_{i \in \tau \setminus I_{\tau} }\|\ve{a}_{i}\|^2 \| \ve{x} \|^2 \right)\\
    &= \sum_{\tau \in {[m] \choose \beta_k}} \left( \sum_{i \in I_\tau} n\sigma^2 \| \ve{x} \|^2 + \sum_{i \in \tau \setminus I_{\tau} }\|\ve{a}_{i}\|^2 \| \ve{x} \|^2 \right) \\
    &\leq \sum_{\tau \in {[m] \choose \beta_k}} \left( \beta_k n\sigma^2 + \sum_{i \in \tau \setminus I_{\tau} }\|\ve{a}_{i}\|^2  \right)\| \ve{x} \|^2 \\
    &= {m \choose \beta_k} \left( \beta_k n\sigma^2 + \sum_{i \in \tau \setminus I_{\tau} }\|\ve{a}_{i}\|^2  \right)\| \ve{x} \|^2 
\end{align*}
where the first inequality follows from the Cauchy-Schwartz inequality and remaining computation uses the fact that $\mathbb{E}_a \|\ve{a}_i \|^2 = n\sigma^2$ and simplifies the expression. The lower bound follows from
\begin{align*}
   \sum_{\tau \in {[m] \choose \beta}} \mathbb{E}_a\|A_{\tau}\ve{x}\|_\infty^2 
   &= 
   \sum_{\tau \in {[m] \choose \beta}} \mathbb{E}_a \max_{i\in\tau} \langle\ve{a}_i, \ve{x} \rangle^2 \geq \sum_{\tau \in {[m] \choose \beta}} \mathbb{E}_a \max_{i\in I_\tau} \langle\ve{a}_i, \ve{x} \rangle^2 \\ 
   &\geq \sum_{\tau \in {[m] \choose \beta}} \left(\mathbb{E}_a \max_{i\in I_\tau} \langle\ve{a}_i, \ve{x} \rangle\right)^2 
    \geq \sum_{\substack{\tau \in {[m] \choose \beta} \\ |I_\tau| = \beta_k}} \left(\mathbb{E}_a \max_{i\in I_\tau} \langle\ve{a}_i, \ve{x} \rangle\right)^2 \\
   &\geq \sum_{\substack{\tau \in {[m] \choose \beta} \\ |I_\tau| = \beta_k}} \sigma^2 \|\ve{x} \|^2 \log(\beta_k)\\
   &\geq {m^\prime \choose \beta_k} \sigma^2 \|\ve{x} \|^2 \log(\beta_k),
\end{align*}
where the second to last inequality uses the fact that for i.i.d. Gaussian random variables $g_1, g_2, ... g_N \sim \mathcal{N}(0, \sigma^2)$, we have that $\mathbb{E}(\max_{ i \in [N]} g_i) \gtrsim \sigma \sqrt{\log N}$ and that $\langle \ve{a}_i, \ve{x} \rangle \sim \mathcal{N}(0, \sigma^2 \|\ve{x}\|^2)$. 

Therefore, we have
\begin{align*}
    \gamma_k &= \frac{\sum_{\tau \in {[m] \choose \beta}} \mathbb{E}_a\|A_{\tau}\ve{x}\|^2}{\sum_{\tau \in {[m] \choose \beta}} \mathbb{E}_a\|A_{\tau}\ve{x}\|_\infty^2} \\
    & \leq \frac{{m \choose \beta_k} \left( \beta_k n\sigma^2 + \sum_{i \in \tau \setminus I_{\tau} }\|\ve{a}_{i}\|^2\right) }{{m^\prime \choose \beta_k} \sigma^2 \log(\beta_k)}.
\end{align*}
Dividing all terms by $\sigma^2$ attains the desired result \eqref{eq:gaussGamma}.
\end{proof}

We conjecture that the true bound is actually $\mathcal{O}(\beta_k/\log(\beta_k))$ and that the $n$ is an artifact of our proof technique; {throughout our experiments varying $n$ (and for various $m$), we have not found any dependence of $\gamma_k$ on $n$.  For this reason, w}e have plotted $\gamma_k$ and the corresponding conjectured bound in the left of Figure~\ref{fig:dynranges} for a Gaussian matrix of size $50000 \times 500$.

\begin{remark} To extend to other distributions, one can simply note that as the signal dimension $n$ gets large, the Law of Large numbers can be invoked and a similar computation can be used to show an upper bound on the dynamic range of the system.
\end{remark}

\begin{figure}
    \centering
    \includegraphics[width=0.49\textwidth]{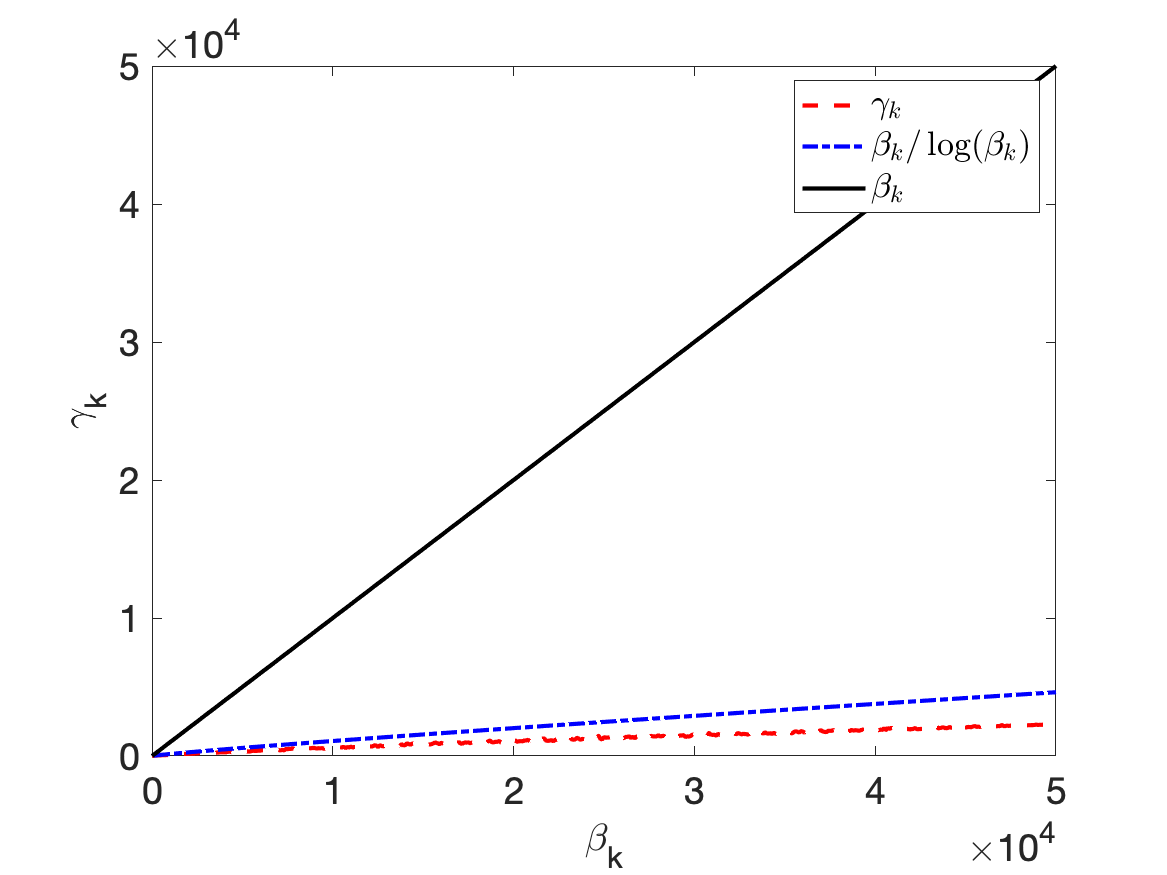}\includegraphics[width=0.49\textwidth]{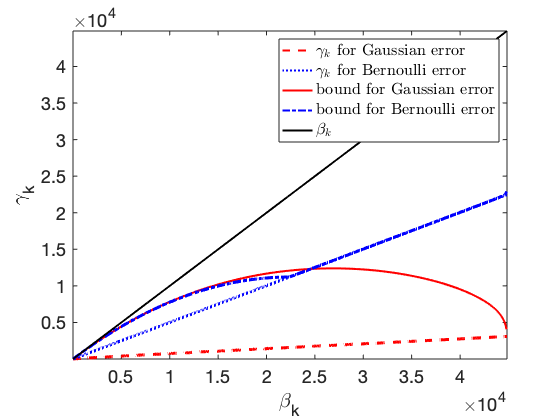}
    \caption{Dynamic ranges $\gamma_k$ for various sample sizes $\beta_k$.  Left: Gaussian matrix $A \in \mathbb{R}^{50000 \times 500}$ and Gaussian error ${\ve{e}_k} \in \mathbb{R}^{500}$ (red). Conjectured bound is plotted in blue. Right: Incidence matrix $Q \in \mathbb{R}^{44850 \times 300}$ for complete graph $K_{300}$ with Gaussian error ${\ve{e}_k} \in \mathbb{R}^{300}$ (red) and a sparse error ${\ve{e}_k}$ with Bernoulli random variable entries (blue). Bounds are plotted in the same colors with different line styles.}
    \label{fig:dynranges}
\end{figure}

\subsection{Incidence Matrices} \label{sec:avgcons}
In the previous subsection, we analyzed the dynamic range for systems with measurement matrices that are randomly generated. Deterministically generated measurement matrices are additionally of interest.  In this subsection, we analyze the dynamic range associated to incidence matrices of {undirected} graphs, $\mathcal{G} = (\mathcal{V},{\mathcal{E}})$.  The incidence matrix $Q$ associated to {an undirected} graph is of size ${|\mathcal{E}|} \times |\mathcal{V}|$.  For each {edge}, $(i,j) \in \mathcal{E}$ which connects vertex $i$ to vertex $j$, the associated row of $Q$ is all zeros with a one and negative one in the $i$th and $j$th entries.  These types of matrices arise in {one formulation of} the \emph{average consensus} problem {as a system of linear equations}.  

The average consensus problem on a graph asks that all nodes on the graph learn the average value of initial, secret values held by each node using only local information; that is, each node $i$ initially knows $c_i$ and at solution they should all know $\frac{1}{|\mathcal{V}|}\sum_{i \in \mathcal{V}} c_i$ with communication only across edges.  This problem models computation in many real life applications such as clock synchronization, localization without GPS, distributed data fusion in sensor networks, and load balancing. Many analyses of (asynchronous and synchronous) distributed methods for this problem exploit its formulation as a system of linear equations.  The problem over a directed graph may be formulated as a linear system using either the incidence matrix (described above) or the Laplacian matrix, $L = D - A$ where $D$ is the diagonal matrix of node degrees and $A$ is the adjacency matrix, or more generally as an \emph{average consensus system} defined in \cite{loizou2016new}.

The \emph{gossip methods} that solve the average consensus problem are generalized by the Kaczmarz methods \cite{loizou2019revisiting}. Early work making this connection focused on the formulation of the average consensus problem as a Laplacian system \cite{zouzias2015randomized}, but subsequent work generalized this connection to systems formulated more generally \cite{loizou2016new}.
RK specializes to the \emph{randomized gossip} method in which the pair of nodes which update are selected at random \cite{xiao2004fast,boyd2006randomized}. The connection between gossip methods and other Kaczmarz variants have been observed; the connection to block methods was noted in \cite{loizou2016new}, extended methods in \cite{zouzias2015randomized}, and accelerated methods in \cite{loizou2018accelerated,loizou2019provably}.  This connection has also spawned new gossip methods; in \cite{hanzely2017privacy} the authors propose a privacy preserving gossip method, and in \cite{aybat2017decentralized}, the authors propose an accelerated, decentralized gossip method.  In \cite{loizou2019revisiting}, the authors summarize many of these advances and connections between the Kaczmarz literature and gossip literature.  They first noted and exploited the fact that SKM specializes to a variant of \emph{greedy gossip with eavesdropping (GGE)} in which the nodes are selected from amongst a random sample to maximize the update \cite{ustebay2010greedy}.  Thus, our analysis provides an alternate convergence rate for GGE.

Now, we consider the dynamic range for an incidence matrix.  We can derive a simple bound on the dynamic range in each iteration that depends only upon the entries of the current error vector, $\ve{e}_k := \ve{x}_k - \ve{x}^*$.  In particular,
\begin{multline}
    \gamma_k = \frac{\sum_{\tau \in {[m] \choose \beta_k}} \|Q_\tau \ve{e}_k\|^2}{\sum_{\tau \in {[m] \choose \beta_k}} \|Q_\tau \ve{e}_k\|_\infty^2} = \frac{{m \choose \beta_k} \frac{\beta_k}{m} \sum_{(i,j) \in \mathcal{E}} ({e}_k^{(i)} - {e}_k^{(j)})^2}{\sum_{\tau \in {[m] \choose \beta_k}} \max_{(i,j) \in \tau} ({e}_k^{(i)} - {e}_k^{(j)})^2} 
    \\\le \frac{\beta_k (m-\beta_k+1) \sum_{(i,j) \in \mathcal{E}} ({e}_k^{(i)} - {e}_k^{(j)})^2}{m \sum_{n=\beta_k}^{|\mathcal{E}|}({e}_k^{(n_i)} - {e}_k^{(n_j)})^2},
\end{multline}
where $n_i$ and $n_j$ denote the vertices connected by the $n$th smallest magnitude difference across an edge.  This bound improves for iterates with a sufficient amount of variation in the coordinates.  We have plotted $\gamma_k$ and the corresponding bounds in the right of Figure~\ref{fig:dynranges}.  We calculate these values for the incidence matrix $Q \in \mathbb{R}^{44850 \times 300}$ of the complete graph $K_{300}$ in the cases when the error is a Gaussian vector (red) and a Bernoulli vector (blue).

\begin{proposition}
If the right-hand-side vector associated to the system $Q\ve{x} = \ve{b}$ is $\ve{b} = \ve{0}$, as in the average consensus problem, then this bound on the dynamic range is easily computed from the current iterate, 
$$\gamma_k \le \frac{\beta_k (m-\beta_k+1) \sum_{(i,j) \in \mathcal{E}} (x_k^{(i)} - x_k^{(j)})^2}{m \sum_{n=\beta_k}^{|\mathcal{E}|}(x_k^{(n_i)} - x_k^{(n_j)})^2}.$$
\end{proposition}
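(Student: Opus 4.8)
The plan is to obtain this statement as an immediate specialization of the edge-difference bound on $\gamma_k$ derived just above, observing that the homogeneous right-hand side forces the edge differences of the error to coincide with those of the iterate.

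First I would note that, by definition, $\ve{x}^* = Q^\dagger \ve{b} = Q^\dagger \ve{0} = \ve{0}$, so that $\ve{e}_k = \ve{x}_k - \ve{x}^* = \ve{x}_k$. Equivalently, and more robustly, for an incidence matrix the row indexed by edge $(i,j) \in \mathcal{E}$ acts by $(Q\ve{y})_{(i,j)} = y^{(i)} - y^{(j)}$, so
$$e_k^{(i)} - e_k^{(j)} = (Q\ve{e}_k)_{(i,j)} = (Q\ve{x}_k)_{(i,j)} - b_{(i,j)} = x_k^{(i)} - x_k^{(j)},$$
for every edge $(i,j) \in \mathcal{E}$, where the middle equality uses $Q\ve{x}^* = \ve{b} = \ve{0}$. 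Thus every edge difference entering $\gamma_k$ is unchanged when $\ve{e}_k$ is replaced by $\ve{x}_k$.

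Next I would substitute this identity into the bound
$$\gamma_k \le \frac{\beta_k (m-\beta_k+1) \sum_{(i,j) \in \mathcal{E}} (e_k^{(i)} - e_k^{(j)})^2}{m \sum_{n=\beta_k}^{|\mathcal{E}|}(e_k^{(n_i)} - e_k^{(n_j)})^2}$$
established immediately above. The numerator transfers term by term. For the denominator, the indices $n_i, n_j$ are defined by ordering the edges according to the magnitude of their edge difference; since these magnitudes are identical for $\ve{e}_k$ and $\ve{x}_k$, the induced ordering of edges—and hence the set of indices retained in the tail sum $\sum_{n=\beta_k}^{|\mathcal{E}|}$—is unchanged, so each retained summand equals $(x_k^{(n_i)} - x_k^{(n_j)})^2$. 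This produces exactly the claimed bound.

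The only step meriting attention is the invariance of the edge-magnitude ordering under the substitution, and this follows at once from the pointwise equality of edge differences. I therefore anticipate no substantive obstacle: the proposition is a direct corollary of the preceding dynamic-range bound, specialized to the homogeneous right-hand side $\ve{b} = \ve{0}$.
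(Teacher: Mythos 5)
Your proposal is correct and matches the paper's (implicit) reasoning: the proposition is stated as an immediate consequence of the preceding edge-difference bound, obtained by noting that $Q\ve{e}_k = Q\ve{x}_k - \ve{b} = Q\ve{x}_k$ when $\ve{b}=\ve{0}$, so every edge difference of $\ve{e}_k$ equals the corresponding edge difference of $\ve{x}_k$ and the ordering defining $n_i,n_j$ is preserved. Your explicit attention to the invariance of the edge-magnitude ordering is a nice touch the paper leaves unstated, but the route is the same.
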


\begin{remark} We note that this bound on the dynamic range holds for any incidence matrix $Q$, including those associated with directed graphs.  In the case of directed graphs, however, additional assumptions must be made to ensure the well-posedness of the average consensus problem.  Additionally, the Kaczmarz methods must be altered to ensure communication in only one direction along edges for directed graphs.  Analyses of regular Kaczmarz methods, such as RK or SKM, do not apply to average consensus systems on directed graphs.  We leave consideration of Kaczmarz type methods for this variant of the average consensus problem to future work.\end{remark}

\section{Experiments}

In this section, we present simulated and real world experiments using SKM for varying sample sizes $\beta$. In the simulated experiments, we compare the theoretical convergence guarantees to the empirical performance of SKM, measured by approximation error $\|\ve{e}_j\|^2$, averaged over 20 random trials. The number of rows $m = 50000$ and number of columns $n = 500$ are fixed for all simulated experiments. The solution to the system is a vector $\ve{x}^* \in \mathbb{R}^n$ where each entry is drawn i.i.d. from a standard Gaussian distribution. In each experiment, the systems are consistent so that $\ve{b} = A\ve{x}^*$. The sample sizes considered for this experiment are $\beta = \{1, 100, 200, 500, 1000 \}$. Unless otherwise stated, the rows of $A$ are uniformly selected without replacement. The experiments presented in this section were are performed in
MATLAB 2017b on a MacBook Pro 2015 with a 2.7 GHz Dual-Core
Intel Core i5 and 8GB RAM. For practical reasons, we normalize the rows of $A$ and utilize the bound shown in Corollary~\ref{cor:SKMconvergence}.  

\begin{figure}
    \centering
    \includegraphics[width=.32\textwidth]{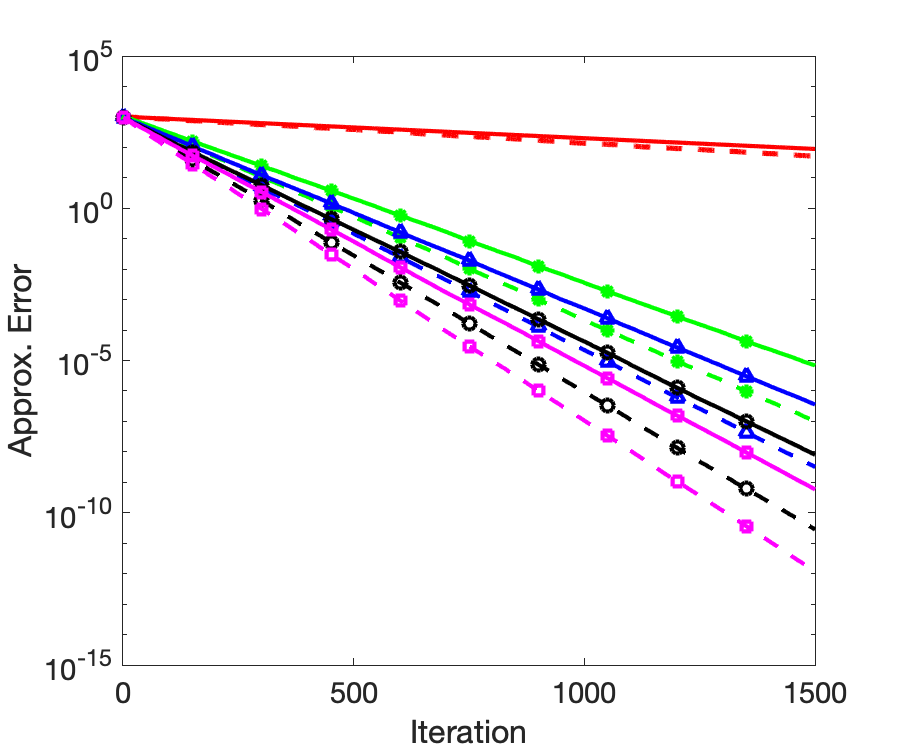}
     \includegraphics[width=.32\textwidth]{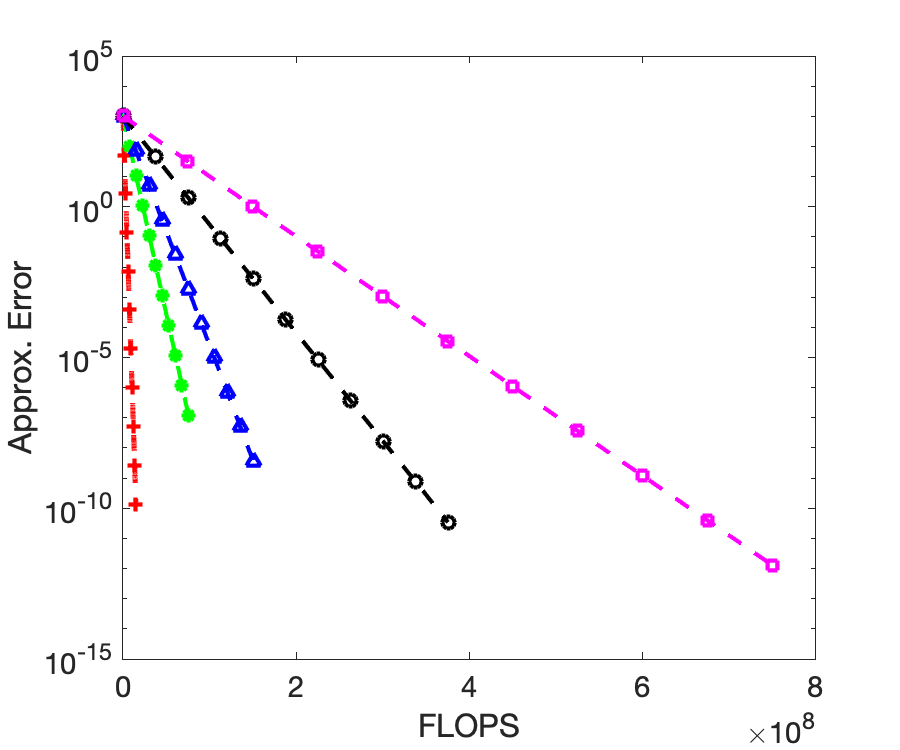}
      \includegraphics[width=.32\textwidth]{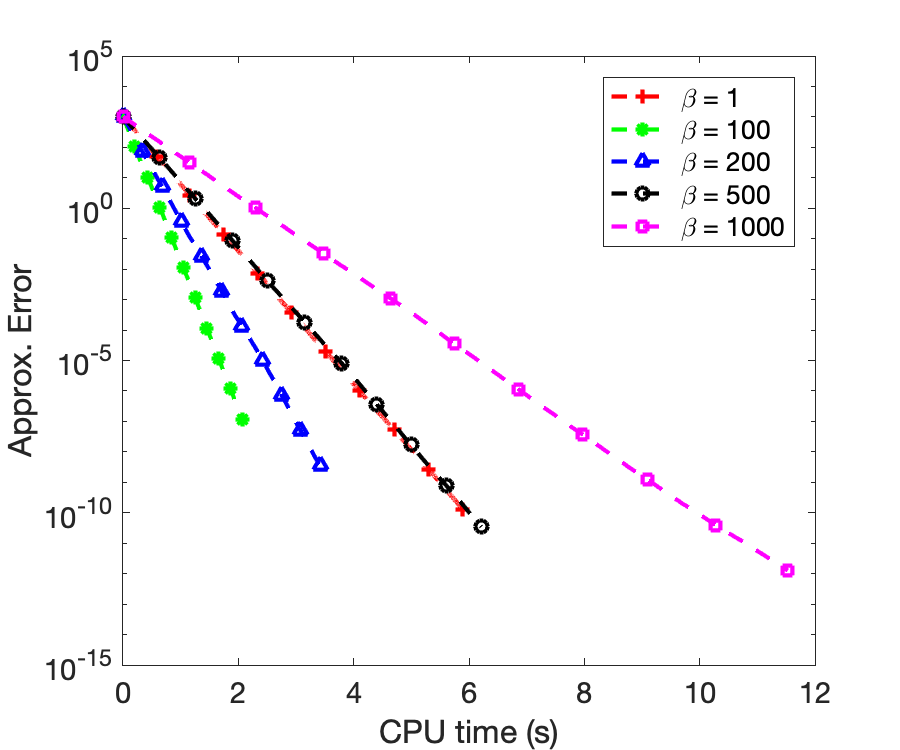}
    \caption{Comparison of SKM for various choices of fixed $\beta$ values on linear system with entries of $A$ drawn from i.i.d. from $\mathcal{N}(0,1)$. (left) Iteration vs Approximation Error with dashed lines representing average empirical performance of SKM and solid lines representing theoretical upper bounds for SKM. (middle) FLOPS vs Approximation Error. (right) CPU time vs Approximation Error}
    \label{fig:gauss}
\end{figure}

Figure~\ref{fig:gauss} and Figure~\ref{fig:unif} show the results for Gaussian and Uniform random matrices $A$ respectively. For Gaussian random matrices, each entry of $A$ is drawn i.i.d. from a standard Gaussian distribution. For Uniform random matrices, entries of $A$ are drawn i.i.d. uniformly from the interval $[0,1]$. In each figure, we plot along the horizontal axis the (left subplot) iteration, (middle subplot) FLOPS or floating point operations, and (right subplot) CPU time in seconds. The vertical axis for all plots indicate the average approximation error across random trials. Note that the left most subplot for both figures also contains a solid line, which indicates the theoretical upper bound of the algorithm provided by Corollary~\ref{cor:SKMconvergence}. 

For linear systems with Gaussian random matrices, we see in Figure~\ref{fig:gauss} that the convergence upper bound proven in this work closely matches the behavior of the SKM algorithm regardless of the choice of sample size $\beta$. To compare this result to previous works, note that when $\beta=1$, the upper bound provided in Corollary~\ref{cor:SKMconvergence} simply recovers the previous known upper bound for SKM with normalized rows, a bound which was completely independent of $\beta$. In other words, the solid red line is the comparative previous known SKM upper bound for all $\beta$.

Of course, choices of large sample sizes $\beta$ come at a cost, which are captured in the middle and right most subplots of Figure~\ref{fig:gauss}. When measuring efficiency, it seems that $\beta=1$ makes the most progress with minimal FLOPS while $\beta = 100$ is optimal amongst the tested sample sizes with respect to CPU time. This difference is typically explained by the programming and computer architecture (e.g., it may be more efficient to work on batches of rows as opposed to single rows at a time).

Figure~\ref{fig:unif} uses a uniform random matrix $A$ instead of a Gaussian random matrix. While the algorithm efficiency with respect to FLOPS and CPU time have similar conclusions to those in the Gaussian measurement matrix case (as one would expect), the iteration vs approximation error plot now tells a different story. Unlike in the Gaussian case, the theoretical upper bound no longer closely tracks the approximation error of SKM. The looseness here comes from lower bounding the norm of $\| A\ve{x} \|_2^2$ with the magnitude of $\ve{x}$ times the smallest nonzero singular value of $A$ squared. Empirically, we have seen that this lower bound is tighter for Gaussian systems than Uniform systems. It should be noted that even though our theoretical bounds do not track the approximation error for SKM as tightly, they are still a slight improvement over the previous known bounds for SKM.

\begin{figure}
    \centering
    \includegraphics[width=.32\textwidth]{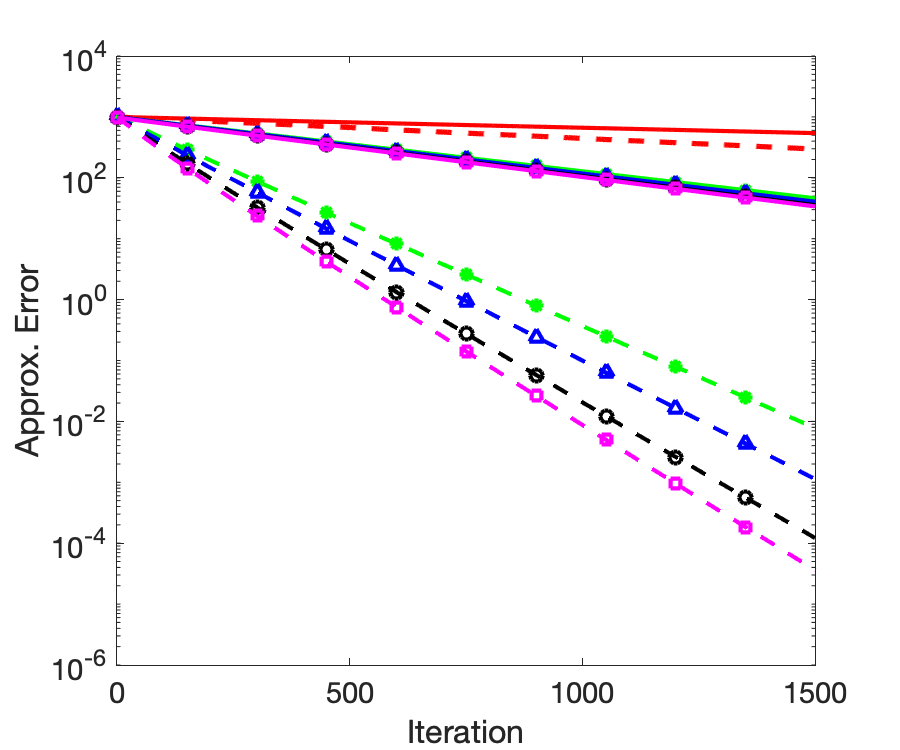}
     \includegraphics[width=.32\textwidth]{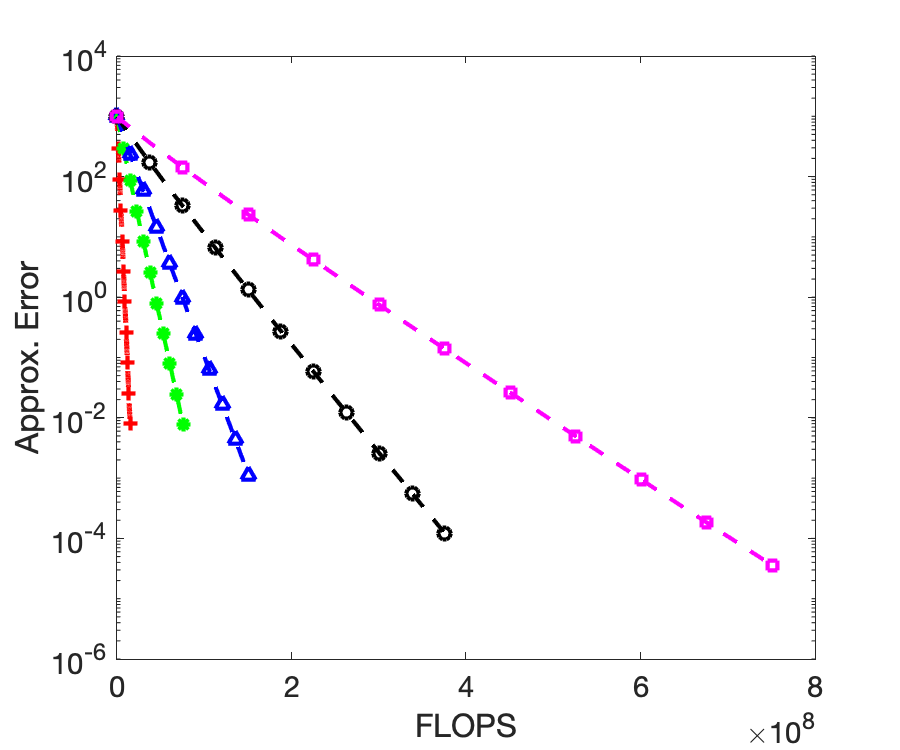}
      \includegraphics[width=.32\textwidth]{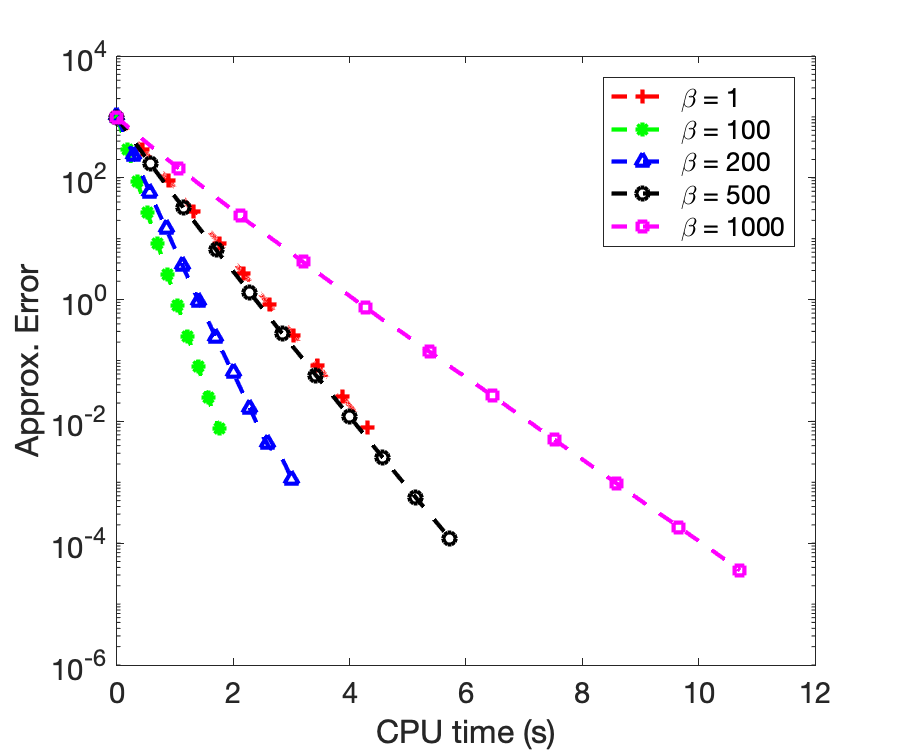}
\caption{Comparison of SKM for various choices of fixed $\beta$ values on linear system with entries of a $A$ drawn from $\text{unif}([0,1])$. (left) Iteration vs Approximation Error with dashed lines representing average empirical performance of SKM and solid lines representing theoretical upper bounds for SKM. (middle) FLOPS vs Approximation Error. (right) CPU time vs Approximation Error}
    \label{fig:unif}
\end{figure}

In addition to being an improvement over the previously known SKM bound, the convergence bound shown in this work enjoys the flexibility of being amenable to a dynamically selected sample size $\beta_k$. Figure~\ref{fig:dynamic} and Figure~\ref{fig:dynamic_unif} show the empirical results from experiments where $\beta_k$ is allowed to change at every iteration. In Figure~\ref{fig:dynamic} the measurement matrix $A$ is again a random Gaussian matrix and in Figure~\ref{fig:dynamic_unif} the measurement matrix entries are drawn i.i.d.\ from Unif$([0,1])$. We consider three sampling regimes that change $\beta_k$ at every iteration: `useDynRng' which allocates $\beta_k$ as a function of the dynamic range, `slowInc' which increases $\beta_k$ at every iteration until $\beta_k = m$, and finally `rand' which uniformly at random selects a $\beta_k \in [m]$ at every iteration. More specifically, the `useDynRng' uses the heuristic  $\beta_k = \lceil \max(m, \frac{m \|A_{\tau_{k-1}}\ve{x}_{k-1} - \ve{b}_{\tau_{k-1}} \|_\infty}{n \|A_{\tau_{k-1}}\ve{x}_{k} - \ve{b}_{\tau_{k-1}} \|_2} ) \rceil$. Note that this choice of $\beta_k$ relies directly on the inverse of an \emph{approximation} of the dynamic range $\gamma_k$ computed without incurring additional computational cost, in a naive attempt to optimize the contraction term of the theoretical bound for SKM. Even though $\beta_k$ changes at each iteration, we see that the theoretical guarantees proven in this work still track the progress of SKM. This indeed opens up new and interesting avenues of research including how one can compute an optimal $\beta_k$ at every iteration. Since the focus of this work is the improvement of the convergence bound of SKM, we leave this for future work.

\begin{figure}
    \centering
    \includegraphics[width=.32\textwidth]{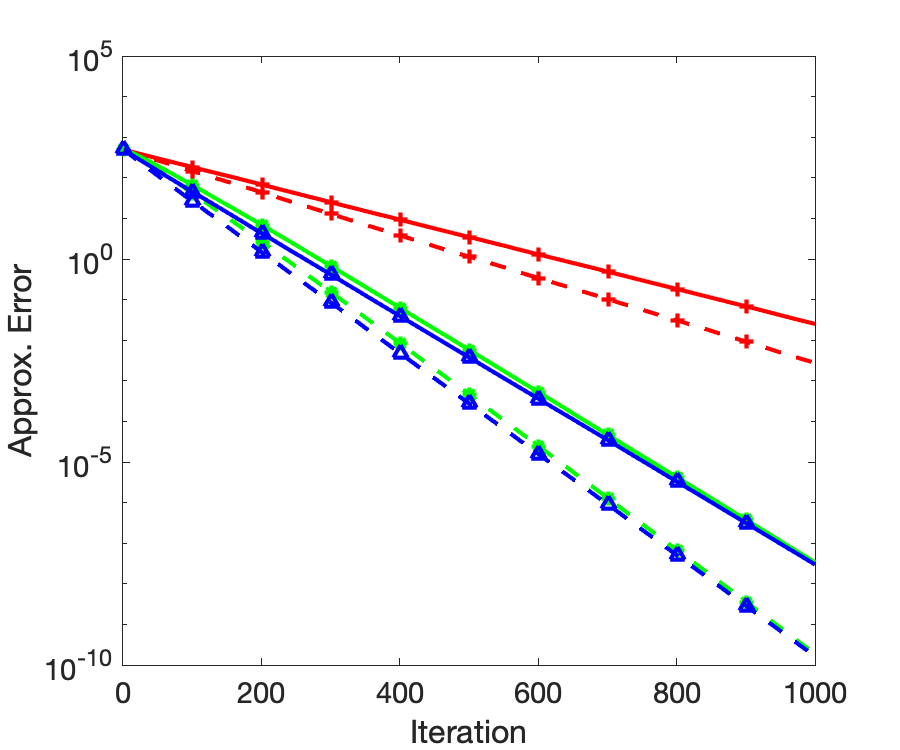}
    \includegraphics[width=.32\textwidth]{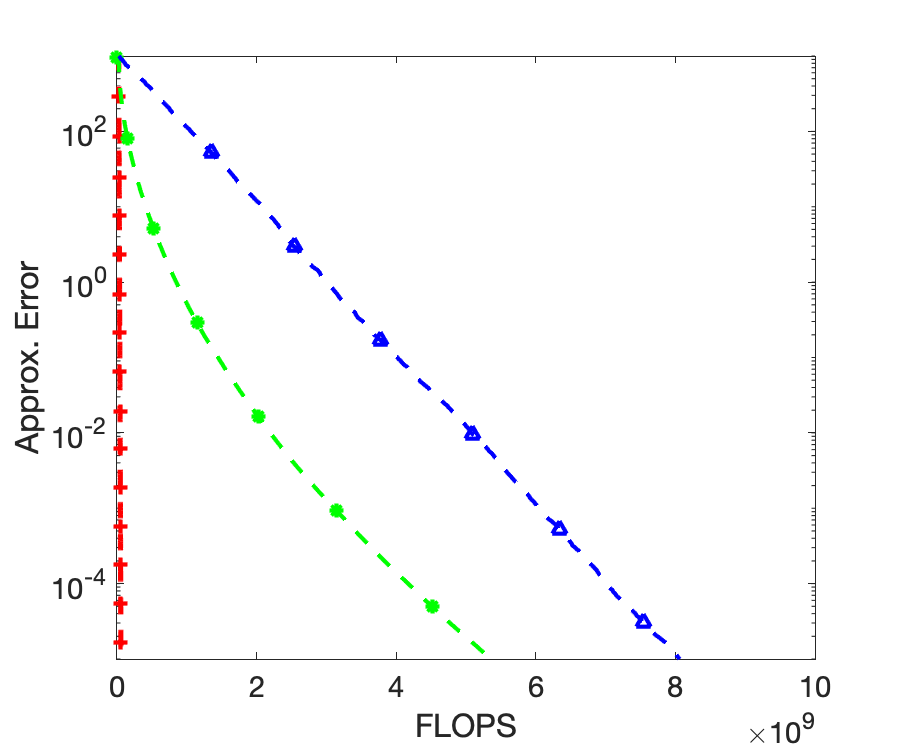}
    \includegraphics[width=.32\textwidth]{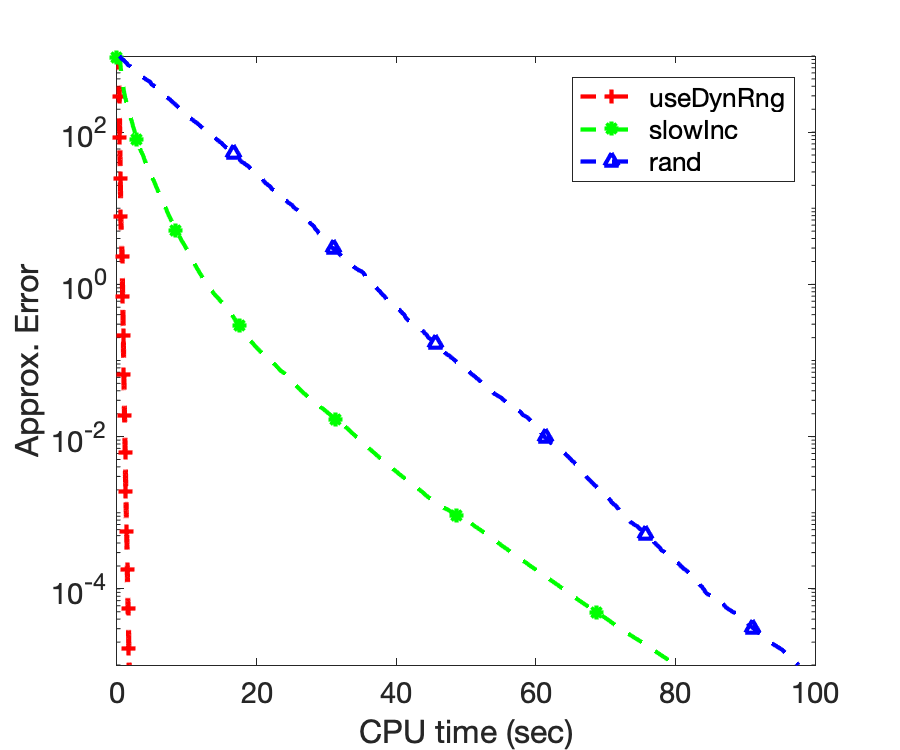}
\caption{Comparison of SKM for various choices of dynamically selected $\beta_k$ values on linear system with entries of $A$ drawn from $\mathcal{N}(0,1)$. }
    \label{fig:dynamic}
\end{figure}

\begin{figure}
    \centering
    \includegraphics[width=.32\textwidth]{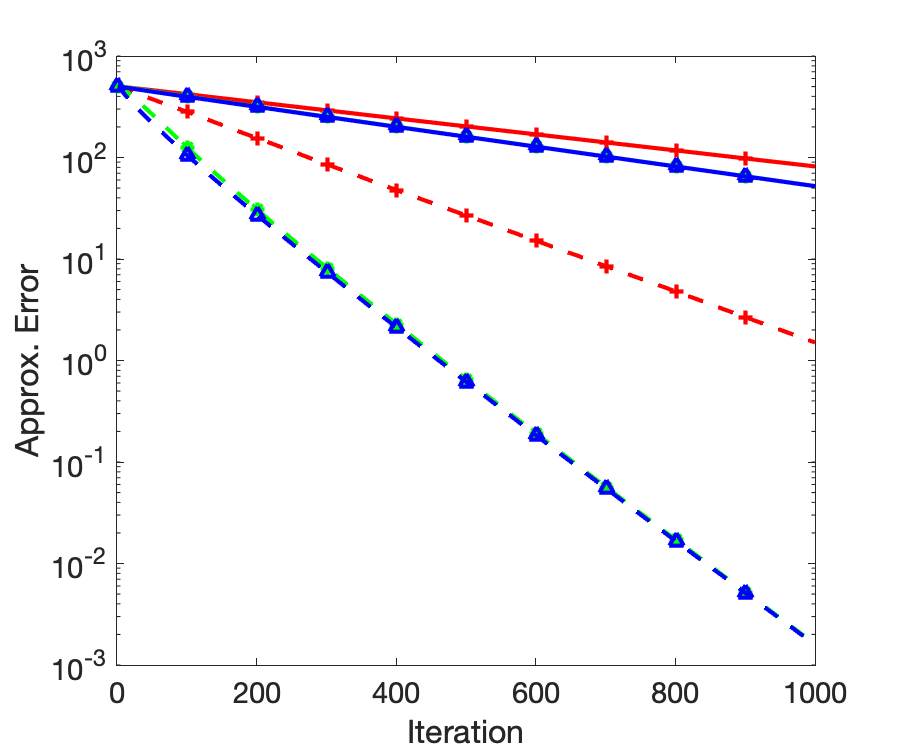}
    \includegraphics[width=.32\textwidth]{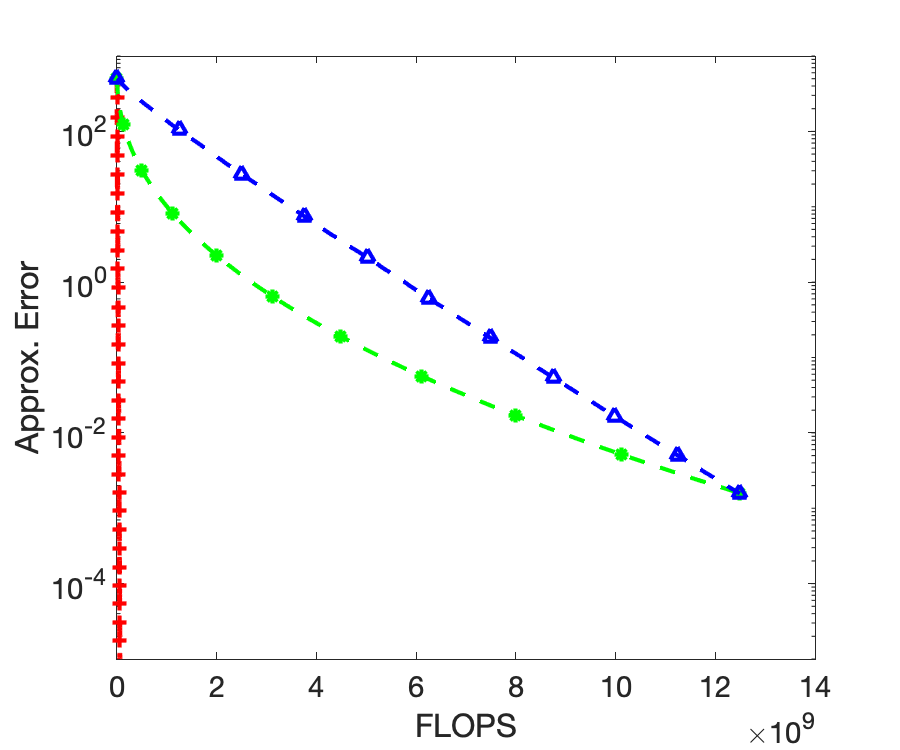}
    \includegraphics[width=.32\textwidth]{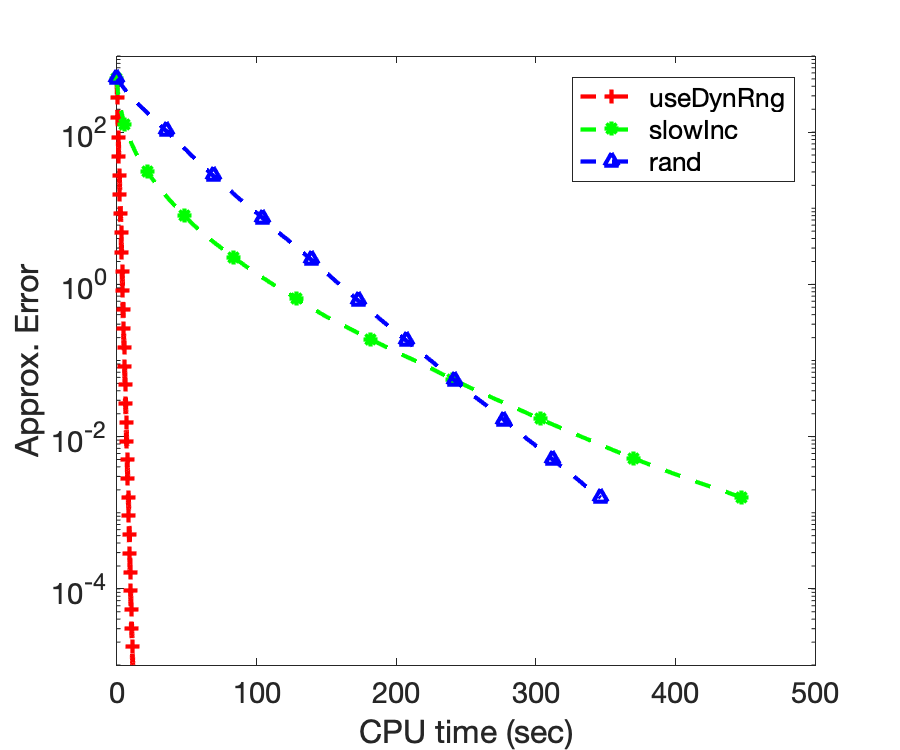}
\caption{Comparison of SKM for various choices of dynamically selected $\beta_k$ values on linear system with entries of $A$ drawn from Unif$([0,1])$. }
    \label{fig:dynamic_unif}
\end{figure}

Figure~\ref{fig:gaussbound} employs the upper bound on the dynamic range derived in Proposition~\ref{prop:gammabound} to approximate an upper bound for the error of SKM iterates when $\beta = 100$. Here, we compare the empirical performance of SKM with its previous known upper bound using the contraction term $1 - \frac{\sigma^2_{min}}{m}$ and $1 - \frac{\log(\beta)\sigma^2_{min}}{m}$. Note that we drop the factor of $n$ apparent in Proposition~\ref{prop:gammabound} as we suspect it to be an artifact of the proof technique used and conjecture that the true upper bound of the dynamic range is actually $\mathcal{O}(\beta / \log(\beta))$.

\begin{figure}
    \centering
    \includegraphics[width=.4\textwidth]{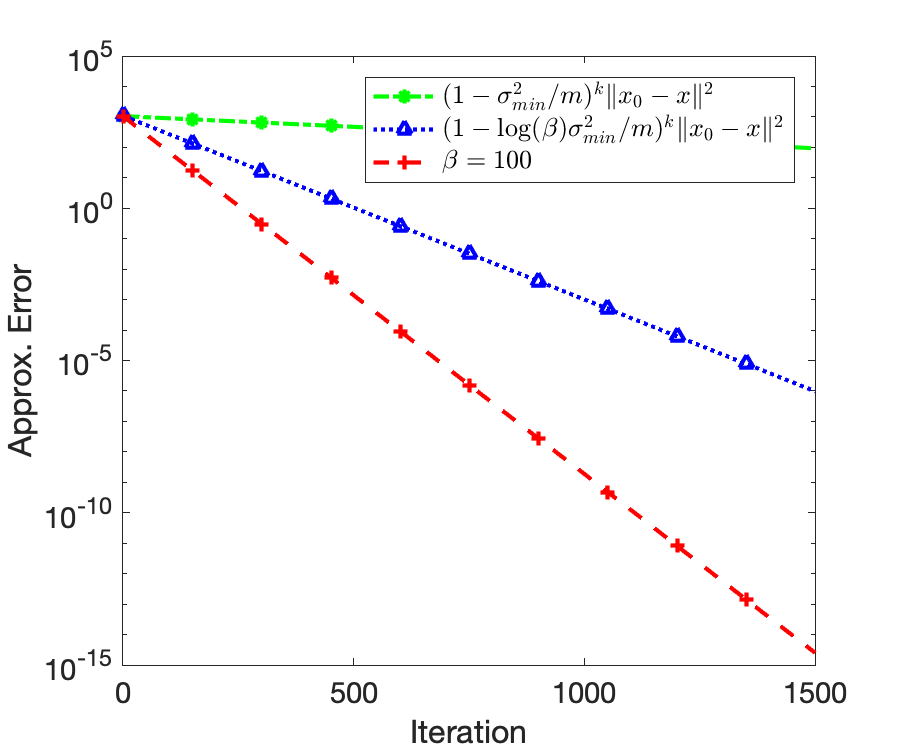}
\caption{Comparison of SKM with $\beta = 100$ with previous known theoretical upper bounds and our upper bound with conjectured Gaussian system dynamic range $\gamma_k$.}
    \label{fig:gaussbound}
\end{figure}

In both the Gaussian and Uniform synthetic experiments, the row norms of $A$ are of similar magnitude on average and thus choosing $\beta_k$ rows of the measurement matrix uniformly at random will behave similarly to the theoretically imposed probability distribution introduced in~\eqref{eq:probdist}. In the next experiment, we consider a setting where the entries of the measurement matrix are $a_{ij} \sim \mathcal{N}(0, i/\sqrt{n})$ so that for each row, $\mathbb{E}\|\ve a_i \|^2 = i$. Since~\eqref{eq:probdist} is computationally impractical to implement, we will continue to select rows of $A$ uniformly at random without replacement to evaluate the performance of SKM for various choices of $\beta$. The results of this experiment are presented in Figure~\ref{fig:mixedgauss}. As in the previous synthetically generated experiments, $m = 50000$, $n = 500$, and the underlying signal $x$ is a standard Gaussian random vector. Despite not sampling rows as imposed by \eqref{eq:probdist}, we see that SKM still converges with rates similar to those in Figure \ref{fig:gauss} and $\beta=100$ outperforms the others with respect to CPU time. 

\begin{figure}
    \centering
    \includegraphics[width=.32\textwidth]{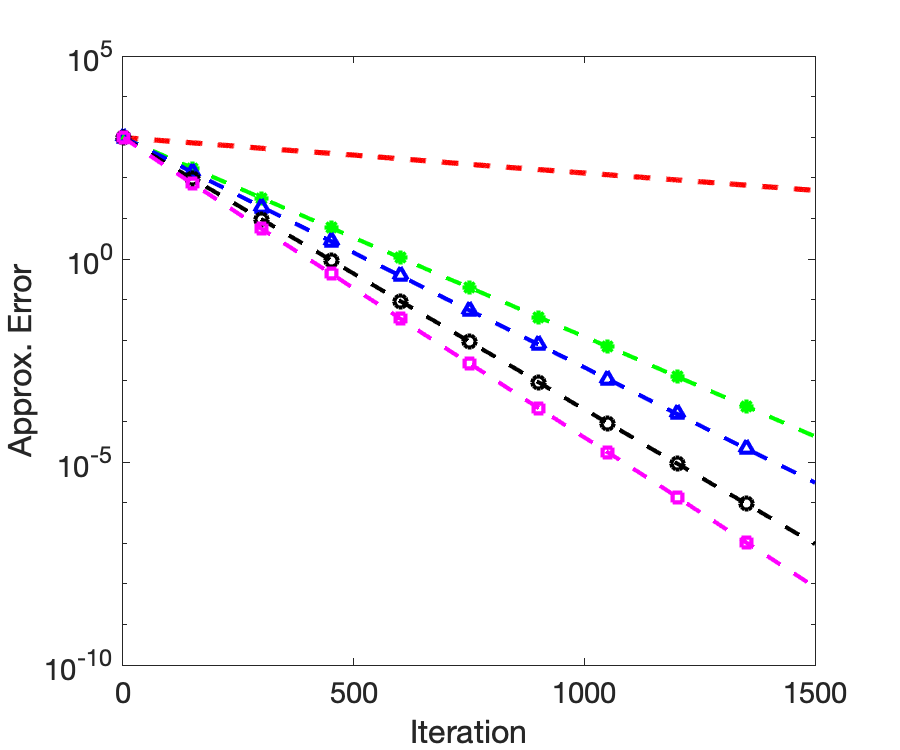}
     \includegraphics[width=.32\textwidth]{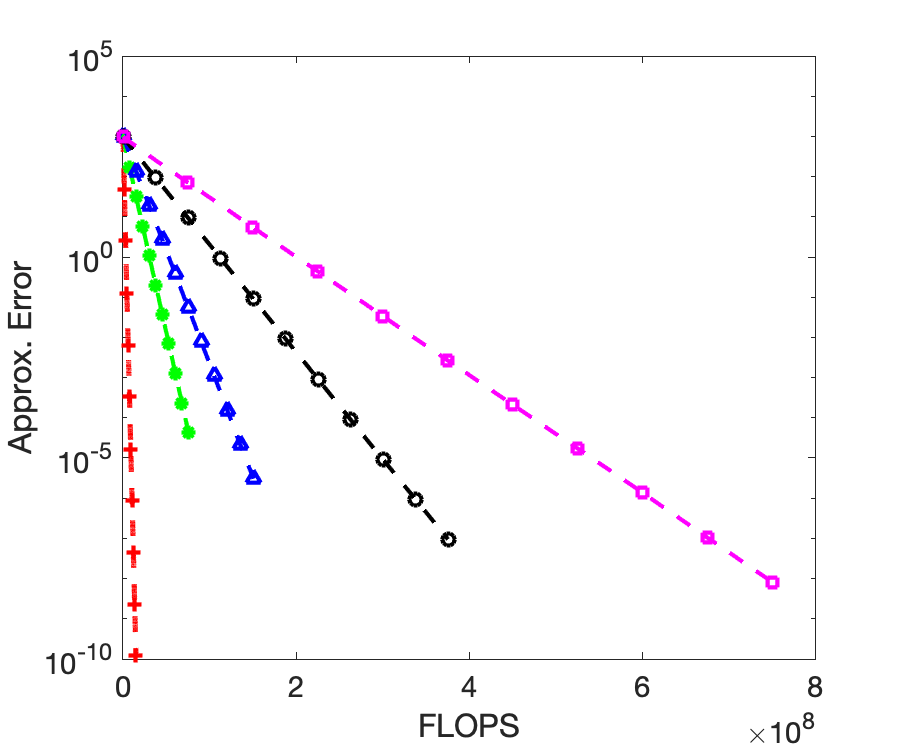}
      \includegraphics[width=.32\textwidth]{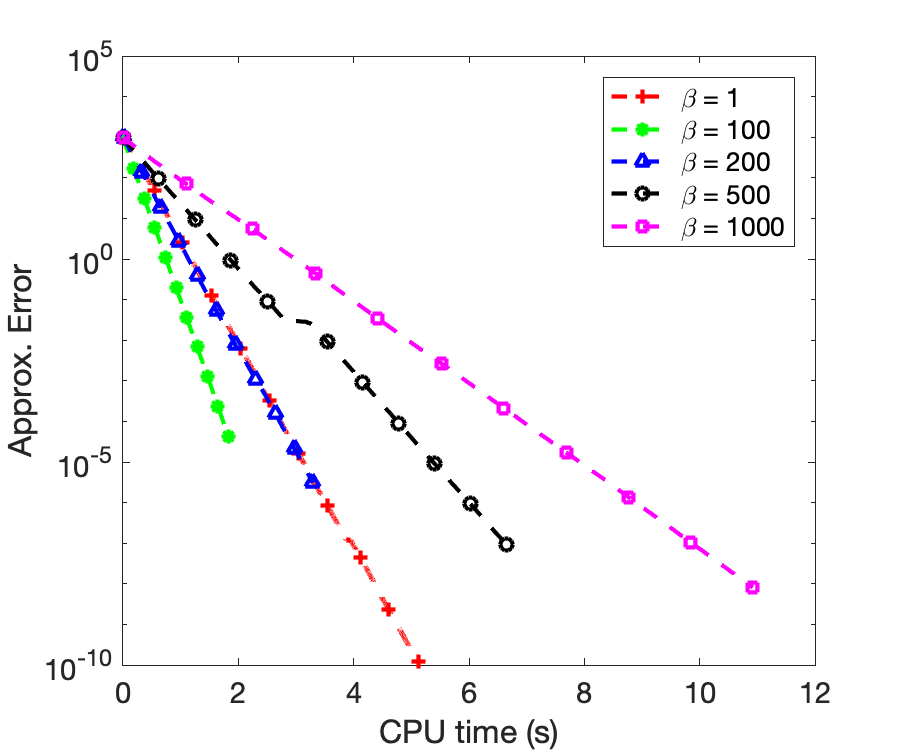}
\caption{Comparison of SKM for various choices of fixed $\beta$ values on linear system with entries $a_{ij}$ drawn from $\mathcal{N}(0, i/\sqrt{n})$. (left) Iteration vs Approximation Error with dashed lines representing average empirical performance of SKM and solid lines representing theoretical upper bounds for SKM. (middle) FLOPS vs Approximation Error. (right) CPU time vs Approximation Error}
    \label{fig:mixedgauss}
\end{figure}

Next we move on to evaluate the performance of SKM on incidence matrices of graphs. We start with the the AC systems discussed in Section~\ref{sec:avgcons}. Here, the graph $\mathcal{G}$ is a complete graph $K_{100}$ with corresponding incidence matrix $Q \in \{-1,0,1 \}^{4950 \times 100}$. The unknown underlying vector $\ve{x} \in \mathbb{R}^{100}$ is $\ve{x} = \hat{\mu}\ve{1}_{100}$ where $\ve{1}_{100}$ is a $100$-dimensional vector of ones and $\hat{\mu}$ is the empirical average of 100 random draws from a standard normal distribution. The results of this experiment are provided in Figure \ref{fig:ac_gauss}.

\begin{figure}
    \centering
    \includegraphics[width=.32\textwidth]{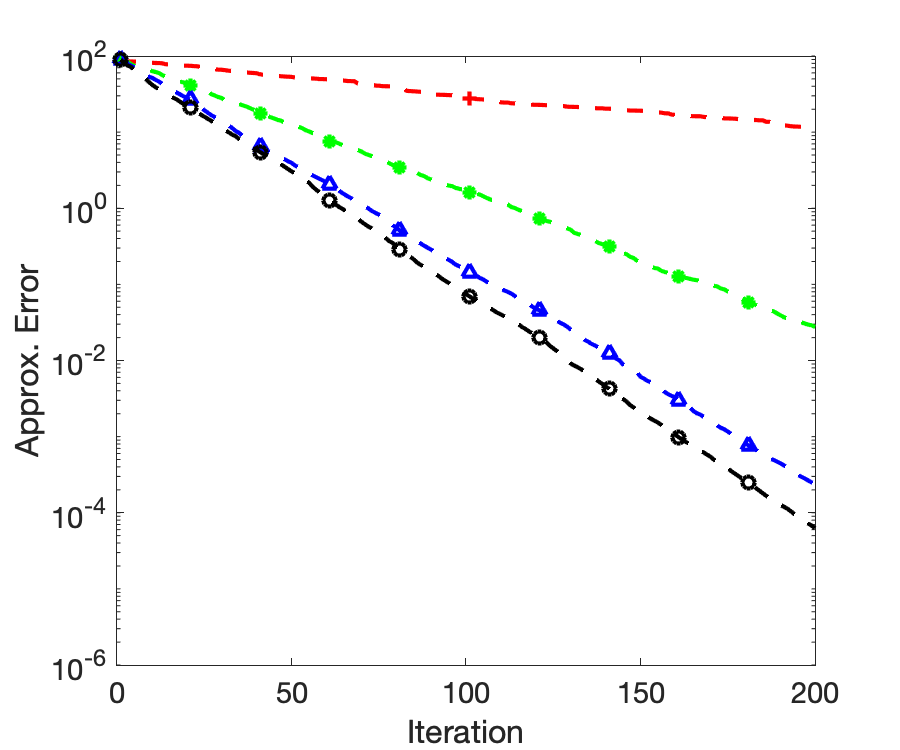}
     \includegraphics[width=.32\textwidth]{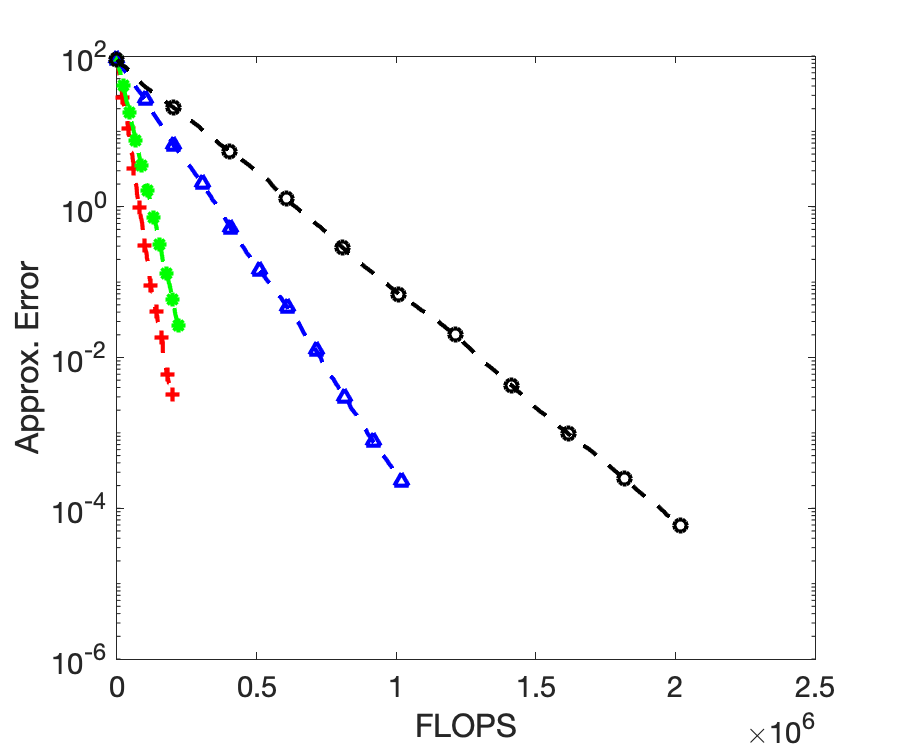}
      \includegraphics[width=.32\textwidth]{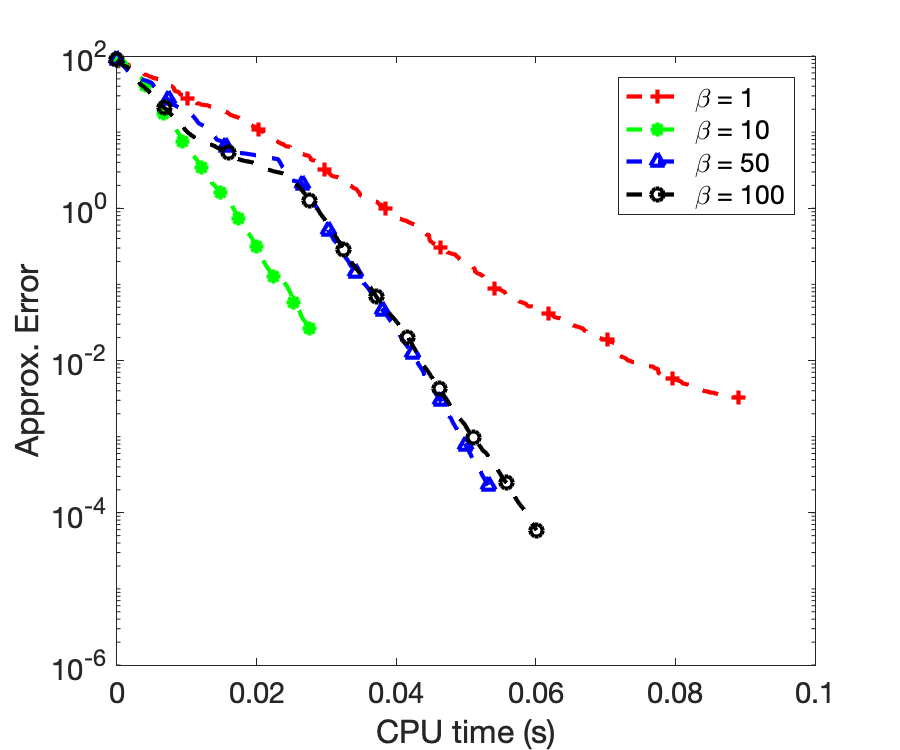}
\caption{Comparison of SKM for various choices of fixed $\beta$ on AC systems. (left) Iteration vs Approximation Error with dashed lines representing average empirical performance of SKM. (middle) FLOPS vs Approximation Error. (right) CPU time vs Approximation Error}
    \label{fig:ac_gauss}
\end{figure}

Figure~\ref{fig:ac_pl} demonstrates the performance of SKM on a graph $\mathcal{G}$ which reflects a scale-free network, i.e., a graph whose degree distribution follows the power law. To create the graph, we employ the implementation of the Barab\'{a}si-Albert (BA) model~\cite{albert2002statistical} with an initial graph of five vertices and ending with a graph of 300 vertices ~\cite{baa}. For more details on scale-free networks, see~\cite{albert2002statistical}. In Figure~\ref{fig:ac_pl} we again observe exponential convergence in the mean approximation error and optimal performance with respect to CPU time when $\beta = 10$.

\begin{figure}
    \centering
    \includegraphics[width=.32\textwidth]{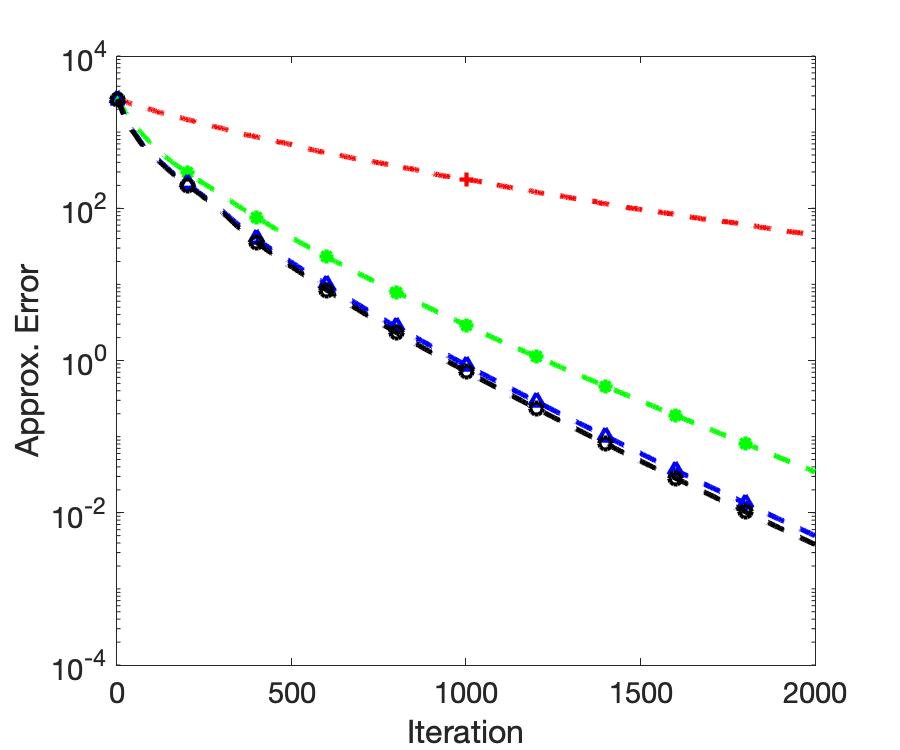}
     \includegraphics[width=.32\textwidth]{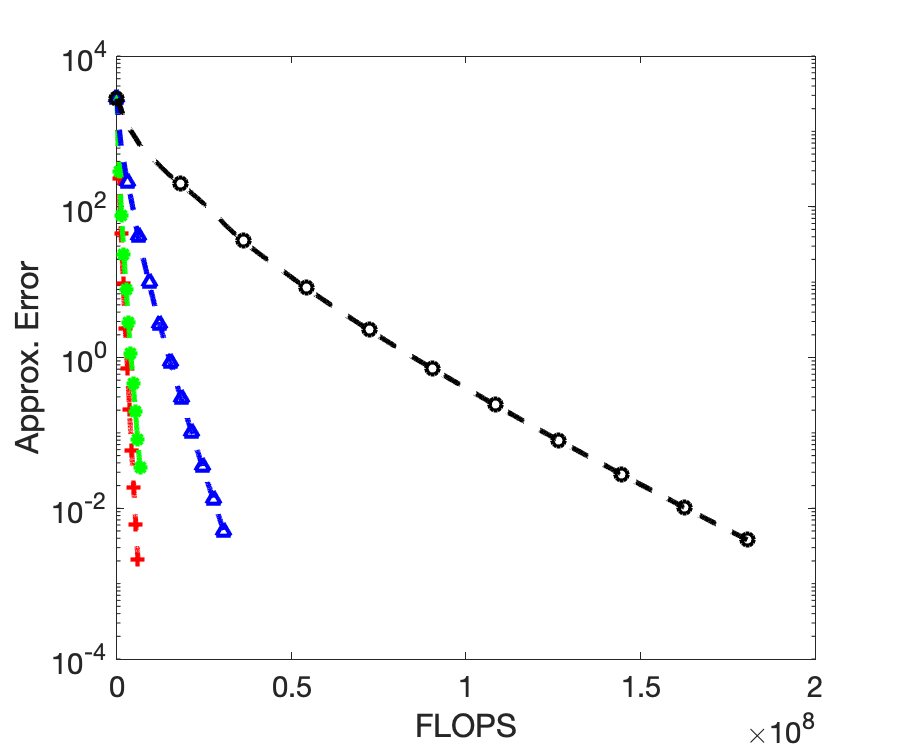}
      \includegraphics[width=.32\textwidth]{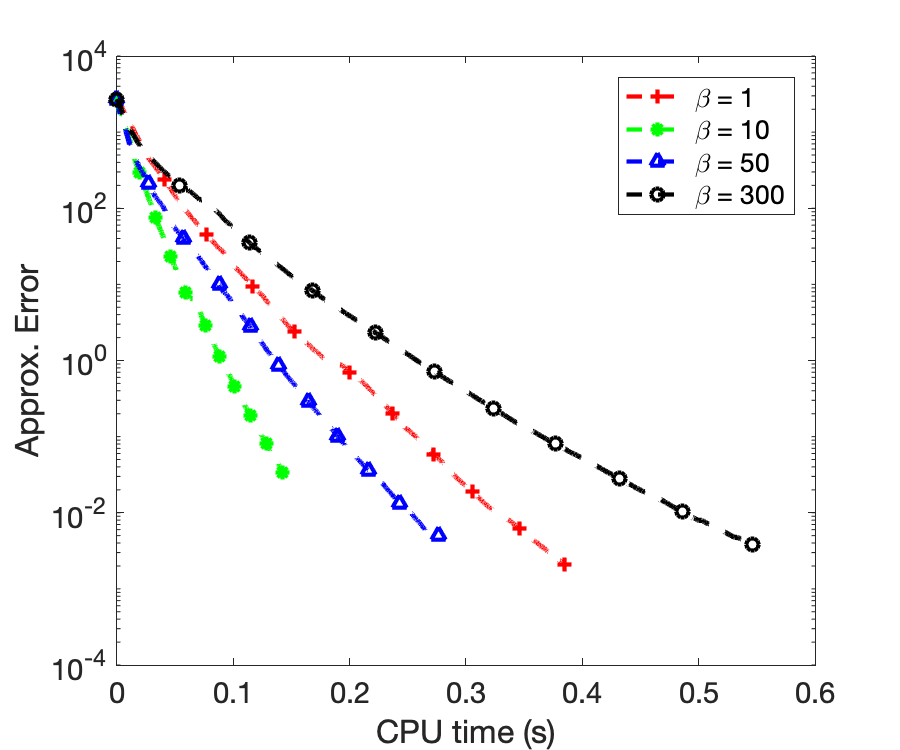}
\caption{Comparison of SKM for various choices of fixed $\beta$ on an incidence matrix of a scale-free network. (left) Iteration vs Approximation Error with dashed lines representing average empirical performance of SKM. (middle) FLOPS vs Approximation Error. (right) CPU time vs Approximation Error}
    \label{fig:ac_pl}
\end{figure}

Our last experiment demonstrates the performance of SKM with varying choices of $\beta$ on real world data from the SuiteSparse Matrix Market~\cite{davis2011university}. For these experiments, we employ the `Maragal\_4', `well1850', and `ash958' matrices which are of dimension $1964 \times 1034$, $1850 \times 712$, and $958 \times 292$ respectively. It is useful to note that `Maragal\_4' and  `well1850' have low rank data structures. These matrices are used as the measurement matrix $A$ and the underlying signal is arbitrarily chosen to be in the range of $A^T$. Here, we allow SKM to run for a maximum of $10^6$ iterations or terminate once the approximation error has reached the allotted error tolerance of $10^{-6}$. The results of using $\beta \in \{1, 10, 50 \}$ are presented in Table~\ref{tab1} and Table~\ref{tab2}. We also included the performance of Conjugate Gradient Least Squares (GCLS)~\cite{bjorck1996numerical, cgls} for comparison. Note that we do not claim to have optimized either implementation. 

Table~\ref{tab1} and Table~\ref{tab2} demonstrates that for a fixed number of iterations, increasing $\beta$ results in a lower approximation error. It also highlights the trade off between the subset size, the FLOP cost, and the CPU time. As we increase $\beta$, in general, both the FLOP and CPU time increase as well. One interesting observation is that for the `ash958' experiment, the optimal choice of subset size is $\beta=10$ with respect to CPU time. This is further motivation for future work in optimal $\beta$ selection. Finally, as expected, CGLS outperforms the three choices of $\beta$. However, SKM can be more naturally implemented in distributed computing settings. We leave that direction as an avenue for future work as well.

\begin{table}[]
\centering
\begin{tabular}{l|l|l|l|l|l|l|}
\cline{2-7}
 & \multicolumn{3}{c|}{\textbf{SKM $\beta = 1$}} & \multicolumn{3}{c|}{\textbf{SKM $\beta = 10$}} \\ \cline{2-7} 
 & Error        & FLOPS        & CPU time        & Error          & FLOPS         & CPU time      \\ \hline
\multicolumn{1}{|l|}{Maragal\_4} &    2.45          &    2.07e+09          &       176.25          & 0.164          & 1.13e+10      & 460.88        \\ \hline
\multicolumn{1}{|l|}{well1850}   &    7.67          &     1.42e+09         &     112.75            & 0.064          & 7.83e+09      & 238.59        \\ \hline
\multicolumn{1}{|l|}{ash958}     &     1.06e-06         &   4.91e+06           &       0.323          & 1.00e-06       & 5.33e+06      & 0.126         \\ \hline
\end{tabular}
\caption{Performance of SKM using $\beta=1$ and $\beta=10$ on real world data from SuiteSparse Matrix Market.}
\label{tab1}
\end{table}

\begin{table}[]
\centering
\begin{tabular}{l|l|l|l|l|l|l|}
\cline{2-7}
\textbf{}                        & \multicolumn{3}{c|}{\textbf{SKM $\beta=50$}} & \multicolumn{3}{c|}{\textbf{CGLS}} \\ \cline{2-7} 
                                 & Error         & FLOPS         & CPU time     & Error      & FLOPS     & CPU time  \\ \hline
\multicolumn{1}{|l|}{Maragal\_4} & 5.01e-02       & 5.27e+10      & 1593.4       & 3.97e-02    & 2.97e+07  & 5.41      \\ \hline
\multicolumn{1}{|l|}{well1850}   & 2.49e-03       & 3.63e+10      & 1161         & 1.01e-06   & 4.06e+06  & 1.07      \\ \hline
\multicolumn{1}{|l|}{ash958}     & 1.01e-06      & 1.44e+07      & 0.426        & 1.88e-06   & 41158     & 6.66e-3   \\ \hline
\end{tabular}
\caption{Performance of SKM using $\beta=50$ and CGLS on real world data from SuiteSparse Matrix Market.}
\label{tab2}
\end{table}

\section{Conclusion}
This work unifies the spectrum between the randomized Kaczmarz and a greedy variant of the Kaczmarz (Motzkin's Method) algorithm by improving the convergence bound of SKM, a hybrid randomized-greedy algorithm. We show that the behavior of SKM depends on the sample parameter $\beta_k$ and the dynamic range of the linear system. This result improves upon previous work showing only the linear convergence of SKM. In presenting an improved convergence bound for SKM that highlights the impact of the sub-sample size $\beta_k$, we have opened up new and exciting avenues for SKM-type algorithms. Future directions of this work include finding optimal sample sizes for different types of linear systems and designing adaptive sample size selection schemes.

\section*{Acknowledgements}
The authors would like to thank the manuscript referees for their thoughtful and detailed comments which significantly improved earlier versions of this work. The authors also thank Jacob Moorman, Liza Rebrova, Hanbaek Lyu, Deanna Needell, Jes{\'u}s A. De Loera, and Roman Vershynin for useful conversations and suggestions.

\bibliographystyle{abbrv}
\bibliography{basebib,SKMbib}

\end{document}